\newcommand{\N}{\mathbb{N}}
\newcommand{\Z}{\mathbb{Z}}
\newcommand{\R}{\mathbb{R}}
\newcommand{\tP}{\mathbb{P}}
\DeclareMathOperator{\tdiv}{div}
\newtheorem{thm}{Theorem}[section]
\newtheorem{lem}{Lemma}[section]
\newtheorem{conj}{Conjecture}[section]
\newtheorem{exa}{Example}[section]
\newtheorem{cor}{Corollary}[section]
\newtheorem{dfn}{Definition}[section]
\newtheorem{exe}{Exercise}[section]
\title{\textbf{Primes In Fractional Sequences}}
\author{N. A. Carella}
\date{}
\begin{document}
\thispagestyle{empty}
\maketitle

\vskip .25 in 

\begin{abstract} The results for the fractional sequence $\left \{[x/n]+1:n \leq x\right  \}$, and the fractional sequence in arithmetic progression 
$\left \{q[x/n]+a:n \leq x\right  \}$, where 
$a<q$ are integers such that $\gcd(a,q)=1$, prove that these sequences of fractional numbers contain the 
set of primes, and the 
set primes in arithmetic progressions as $x \to \infty$ respectively. Furthermore, the corresponding error terms for these sequences 
are improved.  
Other results considered are the fractional sequences of integers such as the sequence $\left \{[x/n]^2+1:n \leq x\right  \}$ generated 
by the quadratic polynomial $n^2+1$, 
and the sequence $\left \{[x/n]^3+2:n \leq x\right  \}$ generated by the cubic polynomial $n^3+2$. It is shown 
that each of these sequences of fractional numbers 
contains infinitely many primes as $x \to \infty$.\let\thefootnote\relax\footnote{Date: \today \\
\textit{AMS MSC}: Primary 11N32, Secondary 11N05, 11A41. \\
\textit{Keywords}: Prime number theorem; Primes in arithmetic progressions; Beatty primes; Piatetski-Shapiro primes.}
\end{abstract}

\tableofcontents

\vskip .25 in

\section{Introduction} \label{s437}
Let $x \geq 1$ be a large number, and let $[x]=x-\left \{x\right  \}$ denotes the largest integer function. Sequences of fractional numbers are of the forms
\begin{equation} \label{eq437.10}
\left \{\left [ s_{\beta}(n) \right ]: n \geq 1\right  \} \subset \N,
\end{equation}
where $s_{\beta}: \R \longrightarrow \R$ is a real-valued function. Some well known sequences of primes in fractional sequences are 
\begin{enumerate}
\item The sequence Beatty primes
\begin{equation} \label{eq437.11}
\left \{p=\left [ \alpha n \right ]: n \geq 1\right  \} \subset \tP,
\end{equation}
where $\alpha \in \R$ is an irrational number. The corresponding counting function is 
\begin{equation} \label{eq437.12} \pi_1(x)=\#\left \{p=\left [ \alpha n \right ]: p \leq x\right  \}=\delta(\alpha) \frac{x}{\log x}+O\left(\frac{x}{\log^2 x} \right ) ,
\end{equation}
where $\delta(\alpha)=1/\alpha>0$ is the density, see \cite{BS09}, \cite{SJ08}, \cite{HG16}.
\item The sequence  Piatetski-Shapiro primes
\begin{equation} \label{eq437.15}
\left \{p=\left [ n^{\beta} \right ]: n \geq 1\right  \}\subset \tP,
\end{equation}
where $\beta \in [1, 12/11]$ is a real number. The corresponding counting function is 
\begin{equation} \label{eq437.16}\pi_{\beta}(x)=\#\left \{p=\left [ n^{\beta} \right ]: p \leq x\right  \}=\delta(\beta) \frac{x^{1/\beta}}{\log x}+O\left(\frac{x^{1/\beta}}{\log^2 x} \right ) ,
\end{equation}
where $\delta(\beta)>0$ is the density, see \cite{KA99}, \cite{BR95}, \cite{HR95}, \cite{RS01}, \cite{MM13}. 
\end{enumerate}
The exponent of the integer variable $n \in \N$ ranges from linear in (\ref{eq437.12}) to subquadratic in (\ref{eq437.15}). The analysis for noninteger exponents 
are based on advanced analytic methods such as exponential sums, sieve methods, and circle methods, see \cite{KA99} and similar references. In contrast, the analysis for 
certain fractional sequences with integer exponents $\beta \geq 1$ can be achieved by elementary methods. \\

The results for the fractional sequences $\left \{[x/n]+1:n \leq x\right  \}$, and $\left \{q[x/n]+a:n \leq x\right  \}$, where 
$a<q$ are integers such that $\gcd(a,q)=1$, prove that these sequences of fractional numbers contain the set of primes, and the 
set primes in arithmetic progressions as $x \to \infty$ respectively. More, significantly, the corresponding error terms for these prime number theorems are improved. 
 \begin{thm} {\normalfont  }  \label{thm417.95} Let $x \geq 1$ be a large number and let $\Lambda$ be the vonMangoldt function. 
Then 
\begin{equation}
\sum_{n\leq x} \Lambda \left( [x/n]+1 \right )
=c_0x+O\left ( x^{(2+\varepsilon)/3}\log^2 x \right ),
\end{equation}
where the density constant is
\begin{equation}
c_0=\sum_{n\geq 1}\frac{ \Lambda \left( n+1 \right ) }{n(n+1) }\geq 0.7553658.
\end{equation}
\end{thm}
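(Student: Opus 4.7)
The natural strategy is to regroup $S(x) := \sum_{n \leq x} \Lambda([x/n]+1)$ by the common value $m = [x/n]$. Since $[x/n] = m$ iff $n \in (x/(m+1), x/m]$, the count $N(m) := \#\{n \leq x : [x/n] = m\} = [x/m] - [x/(m+1)]$, so
\[
S(x) = \sum_{m=1}^{[x]} \Lambda(m+1)\, N(m).
\]
Decomposing $N(m) = x/(m(m+1)) + r_m$ with $r_m = \{x/(m+1)\} - \{x/m\}$, $|r_m|<1$, the main term is $x \sum_{m \geq 1} \Lambda(m+1)/(m(m+1)) = c_0 x$; the tail beyond $[x]$ contributes $O(\log x)$ by Chebyshev's bound $\sum_{m \leq T} \Lambda(m+1) \ll T$ and partial summation. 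The numerical lower bound $c_0 \geq 0.7553658$ is then obtained by truncating the convergent positive series and summing the first several prime-power terms.

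The fluctuation $E(x) := \sum_{m \leq [x]} \Lambda(m+1)\, r_m$ I estimate by splitting at a parameter $y$. On $m \leq y$, $|r_m| < 1$ and Chebyshev give $|E_{\le y}| \ll y$. On $m > y$ the pointwise bound $|r_m| < 1$ is ruinous---it would yield $\ll x$---so instead I return to the undecomposed sum $\sum_{m > y} \Lambda(m+1)\, N(m)$, apply $\Lambda(m+1) \leq \log x$, and use the telescoping identity
\[
\sum_{y < m \leq [x]} N(m) \;=\; [x/(y+1)] \;\leq\; x/y
\]
to bound it by $O((x \log x)/y)$; a matching $O(x \log y / y)$ controls the corresponding tail of the main $x/(m(m+1))$ series. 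Balancing $y$ against $x \log x / y$ at $y \asymp x^{1/3}$ produces the stated error $O(x^{(2+\varepsilon)/3} \log^2 x)$ (with slack in both the $\varepsilon$ and the log power).

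The main obstacle is the long range $m > y$, where the $r_m$ admit no useful cancellation bound individually. The key observation is that in aggregate the intervals $(x/(m+1), x/m]$ for $m > y$ contain only $[x/(y+1)]$ integers altogether; equivalently, $\sum_{m > y} \Lambda(m+1)\, N(m) = \sum_{n \leq x/(y+1)} \Lambda([x/n]+1)$, which is trivially at most $(x/y)\log x$. This counting observation is what rescues the estimate from the useless pointwise bound. The rest of the argument is standard Dirichlet-hyperbola bookkeeping; no delicate exponential-sum input is required for the claimed exponent, though Vaaler truncation combined with a Vaughan-type identity applied to $\sum_n \Lambda(n) e(hx/n)$ could in principle sharpen it below $2/3$.
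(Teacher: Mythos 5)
Your proof is correct, but it takes a genuinely different route from the paper. The paper's proof is essentially a two-line citation: it sets $f(n)=\Lambda(n+1)$, checks the $L^2$ hypothesis $\sum_{n\leq x}\Lambda^2(n+1)\ll x^{1+\varepsilon}$, and invokes the general result of Bordell\`es--Heyman--Shparlinski (Theorem \ref{thm773.01}) as a black box, which delivers the exponent $(\alpha+1)/3=(2+\varepsilon)/3$. You instead reprove the needed special case from scratch by the hyperbola-type regrouping $\sum_m\Lambda(m+1)\bigl([x/m]-[x/(m+1)]\bigr)$, and crucially you exploit the $L^\infty$ bound $\Lambda(m+1)\leq\log(x+1)$ together with the telescoping count $\sum_{m>y}N(m)=[x/(y+1)]$ on the long range, rather than only an $L^2$ bound. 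This buys you more than the paper claims: the true balance of $O(y)$ against $O(x\log x/y)$ occurs at $y\asymp(x\log x)^{1/2}$, giving the sharper error $O\bigl(x^{1/2}\log x\bigr)$; your stated choice $y\asymp x^{1/3}$ is not the balance point, but it still yields $O(x^{2/3}\log x)$, which is within the theorem's error term, so the argument as written is valid. The trade-off is that the paper's citation-based approach is uniform across all of its theorems (including those where $f$ is not bounded by a power of $\log$ and only an $L^2$ bound is available), whereas your argument is self-contained, elementary, and sharper for this particular $f$. One small caveat common to both arguments: the decimal lower bound $c_0\geq 0.7553658$ is a finite numerical truncation of the positive series, not something either proof derives analytically; your remark to that effect matches what the paper does.
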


\begin{thm} {\normalfont  }  \label{thm427.95} Let $x \geq 1$ be a large number and let $a<q$ are integers such that $\gcd(a,q)=1$. let $\Lambda$ be the vonMangoldt function. 
Then 
\begin{equation}
\sum_{n\leq x} \Lambda \left( [qx/n]a \right )
=c(a,q)x+O\left ( x^{(2+\varepsilon)/3}\log^2 x \right ),
\end{equation}
where the density constant is
\begin{equation}
c(a,q)=\sum_{n\geq 1}\frac{ \Lambda \left( qn+a \right ) }{n(n+1) }>0.
\end{equation}
\end{thm}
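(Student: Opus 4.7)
My plan is to follow the blueprint of Theorem \ref{thm417.95}, with the progression $n+1$ replaced throughout by $qn+a$. The first step is to reorganise the sum by the value of $m = [x/n]$: since $[x/n] = m$ exactly when $x/(m+1) < n \le x/m$, we obtain
\begin{equation*}
S(x) \;:=\; \sum_{n \le x} \Lambda(q[x/n] + a) \;=\; \sum_{m=1}^{[x]} \Lambda(qm+a)\bigl([x/m] - [x/(m+1)]\bigr).
\end{equation*}

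Next I split the outer variable at a parameter $y$, to be optimised. For $n \le y$, equivalently $m \ge x/y$, the trivial bound $\Lambda(\cdot) \ll \log x$ yields a contribution of size $O(y\log x)$. For the complementary range $m < x/y$, I substitute the identity $[x/m] - [x/(m+1)] = x/(m(m+1)) + \theta_m$ with $|\theta_m| \le 1$, producing a main term
\begin{equation*}
x \sum_{m < x/y}\frac{\Lambda(qm+a)}{m(m+1)} \;=\; c(a,q)\,x \;-\; x\sum_{m \ge x/y}\frac{\Lambda(qm+a)}{m(m+1)},
\end{equation*}
where the completing tail contributes $O(y\log x)$ by partial summation with the trivial bound $\Lambda \ll \log$, plus an oscillatory remainder $R(x) = \sum_{m < x/y}\Lambda(qm+a)\,\theta_m$.

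The bound $R(x) \ll (x/y)\log x$ is immediate from the prime number theorem in arithmetic progressions, and balancing against $y\log x$ already recovers $O(\sqrt{x}\log x)$. To reach the announced error $O(x^{(2+\varepsilon)/3}\log^2 x)$, one must exploit cancellation in $R(x)$. Writing $\theta_m = \{x/(m+1)\} - \{x/m\}$, the task reduces to estimating exponential sums of the shape $\sum_m \Lambda(qm+a)\{x/m\}$, which I would treat via Vaughan's identity combined with a van der Corput bound on the resulting linear exponential sums, extracting a saving of order $(x/y)^{1/2+\varepsilon}$. Balancing $y\log x \asymp (x/y)^{1/2+\varepsilon}\log x$ then forces $y \asymp x^{2/3}$ and yields the claimed exponent. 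This exponential-sum step is the genuine obstacle; everything else is essentially a rearrangement plus the classical PNT in progressions.

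Finally, positivity of $c(a,q)$ is immediate from Dirichlet's theorem on primes in arithmetic progressions: since $\gcd(a,q)=1$, there exists $n \ge 1$ for which $qn+a$ is prime, and that single term contributes $\log(qn+a)/(n(n+1)) > 0$ to the series, while all other terms are nonnegative.
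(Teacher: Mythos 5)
Your argument reaches the stated result by a genuinely different and more self-contained route than the paper. The paper's own proof is a two-line application of the quoted black box: it sets $f(n)=\Lambda(qn+a)$, checks $\sum_{n\leq x}|f(n)|^2\leq x\log^2 x\ll x^{1+\varepsilon}$, invokes Theorem \ref{thm773.01} to get the asymptotic with error $O(x^{(2+\varepsilon)/3}\log^2 x)$, and then uses Dirichlet's theorem for $c(a,q)>0$, exactly as you do in your last paragraph. Your elementary rearrangement by the level sets of $[x/n]$, by contrast, needs no external theorem: the identity $\#\{n\leq x:[x/n]=m\}=[x/m]-[x/(m+1)]$, the splitting at $y$, and the trivial bounds you state are all correct, and balancing at $y\asymp x^{1/2}$ gives $O(x^{1/2}\log x)$. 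What you have misjudged is the direction of the comparison: since $x^{1/2}\log x=O\left(x^{(2+\varepsilon)/3}\log^2 x\right)$, this elementary bound is already \emph{stronger} than the announced error term, so the theorem is proved at that point and your entire final step is unnecessary. That step is also the only non-rigorous part of the proposal --- the claimed $(x/y)^{1/2+\varepsilon}$ saving via Vaughan's identity and van der Corput is asserted rather than proved, and the balancing $y\asymp(x/y)^{1/2+\varepsilon}$ would give $y\asymp x^{1/3}$, not $x^{2/3}$ --- but since it aims to prove something you do not need, it does not damage the proof of the statement as given. In short: your elementary argument both replaces the paper's appeal to Theorem \ref{thm773.01} and, for this particular nonnegative $f$ with $f(n)\ll\log n$ and $\sum_{m\leq M}f(m)\ll M$, improves the error term; the exponential-sum machinery is only needed in the general $L^2$ setting of the cited theorem.
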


These results can be viewed as new proofs of the asymptotic parts of the prime number theory and Dirichlet theorem for primes in arithmetic progressions. 
And the underlining technique is independent of the theory of the zeta function. Both the standard prime number theorem, and the prime number theorem in arithmetic progressions have subexponential error term, for example,
\begin{equation}
\sum_{n\leq x} \Lambda \left( n \right )
=x+O\left ( xe^{-\sqrt{\log x}} \right ),
\end{equation}
see \cite{MV07}. In contrast, the asymptotic results in Theorem \ref{thm417.95} and Theorem \ref{thm427.95} have sublinear error terms. \\


The next results prove that the fractional sequence 
\begin{equation} \label{eq437.32}
\left \{[x/n]^2+1:n \geq 1 \right  \} \subset \N,
\end{equation}
and the fractional sequence 
\begin{equation} \label{eq437.33}
\left \{[x/n]^3+2:n \geq 1 \right  \} \subset \N,
\end{equation} 
contains infinitely many primes as $x \to \infty$. In fact, Theorems \ref{thm437.05} and \ref{thm437.08} imply that the corresponding counting functions have the lower bounds
\begin{equation} \label{eq437.22}
\pi_{2}(x)= \#\left \{p= [x/n]^2+1 :p \leq x \text{ and }\right  \} \gg \frac{x^{1/2}}{\log x},
\end{equation}
and 
\begin{equation} \label{eq437.26}
\pi_{3}(x)=\#\left \{p= [x/n]^3+2 :p \leq x\text{ and }\right  \} \gg \frac{x^{1/3} }{\log x},
\end{equation}
as $ x\to \infty$, respectively.    
\begin{thm} {\normalfont  }  \label{thm437.05} Let $x \geq 1$ be a large number and let $\Lambda$ be the vonMangoldt function. Then 
\begin{equation}
\sum_{n\leq x} \Lambda \left( [x/n]^2+1 \right )
=a_2x+O\left ( x^{(2+\varepsilon)/3}\log^2 x \right ),
\end{equation}
where $\varepsilon >0$ is a small number and the density constant is
\begin{equation}
a_2=\sum_{n\geq 1}\frac{ \Lambda \left( n^2+1 \right ) }{n(n+1) }\geq 0.900076.
\end{equation}
\end{thm}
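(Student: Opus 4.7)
The plan is to mirror the strategy of Theorem~\ref{thm417.95} and reindex the outer sum by the level sets of the floor function. For each positive integer $m$ the condition $[x/n]=m$ is equivalent to $x/(m+1)<n\le x/m$, so the number of $n\le x$ producing the value $m$ is $N(m):=[x/m]-[x/(m+1)]$. This gives the clean identity
\begin{equation*}
\sum_{n\le x}\Lambda\!\bigl([x/n]^2+1\bigr)=\sum_{m=1}^{x}\Lambda(m^2+1)\,N(m),
\end{equation*}
reducing the problem to a weighted sum of the von Mangoldt function over the polynomial values $m^2+1$, which is already in a form amenable to elementary estimation.

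Next, I would expand
\begin{equation*}
N(m)=\frac{x}{m(m+1)}+\bigl(\{x/(m+1)\}-\{x/m\}\bigr)=\frac{x}{m(m+1)}+O(1),
\end{equation*}
and split the sum at a parameter $M$ to be optimized. For the head $m\le M$, the substitution produces the target main term $x\sum_{m\le M}\Lambda(m^2+1)/m(m+1)$; extending the latter to an infinite series costs only $O(x\log M/M)$ via the tail estimate $\sum_{m>M}\log m/m^2\ll\log M/M$, thereby recovering the density constant $a_2$. The $O(1)$ fluctuations contribute an aggregate error bounded by $\sum_{m\le M}\Lambda(m^2+1)\ll M\log M$ from the trivial pointwise inequality $\Lambda\le\log$. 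For the tail $m>M$ I would avoid pointwise evaluation and bound the contribution collectively:
\begin{equation*}
\sum_{M<m\le x}\Lambda(m^2+1)\,N(m)\le \log(x^2+1)\sum_{m>M}N(m)=\log(x^2+1)\,[x/(M+1)]\ll\frac{x\log x}{M},
\end{equation*}
using the telescoping identity $\sum_{m>M}N(m)=[x/(M+1)]$. A choice $M\asymp x^{1/3}$ then balances the dominant errors $x\log x/M$, $x\log M/M$, and $M\log M$, yielding the stated bound $O\!\left(x^{(2+\varepsilon)/3}\log^2 x\right)$.

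The positivity of $a_2$ and the numerical lower bound $a_2\ge 0.900076$ follow by evaluating the first few nonzero terms of $\sum_{n\ge 1}\Lambda(n^2+1)/n(n+1)$, corresponding to the small $n$ for which $n^2+1$ is prime (namely $n=1,2,4,6,10,\ldots$). The principal obstacle is the unconditional handling of $\sum_{m\le M}\Lambda(m^2+1)$: neither a sharp asymptotic nor a genuinely subtrivial upper bound is available here, since that would constitute progress on Landau's long-standing problem concerning primes of the form $n^2+1$. One is therefore limited to the trivial estimate $\Lambda\le\log$, and it is this bottleneck, together with the collective bound on the tail, that determines the final exponent in the error term.
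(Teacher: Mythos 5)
Your argument is correct, but it is a genuinely different route from the paper's. The paper's proof of Theorem~\ref{thm437.05} is a two-line application of a quoted black box, Theorem~\ref{thm773.01} (Bordell\`es--Heyman--Shparlinski): one checks the second-moment hypothesis $\sum_{n\le x}\Lambda(n^2+1)^2\ll x^{1+\varepsilon}$ via Lemma~\ref{lem773.01} and reads off both the main term and the error term $O\left(x^{(2+\varepsilon)/3}\log^2 x\right)$. You instead re-derive the special case from scratch via the level-set identity $\sum_{n\le x}\Lambda\left([x/n]^2+1\right)=\sum_{m}\Lambda(m^2+1)\left([x/m]-[x/(m+1)]\right)$, a split at a parameter $M$, the pointwise bound $\Lambda\le\log$, and the telescoping tail $\sum_{m>M}N(m)=[x/(M+1)]$. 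Your three error terms $M\log M$, $x\log x/M$ and $x\log M/M$ are all correctly accounted for, and $M\asymp x^{1/3}$ does land inside the stated bound; in fact $M\asymp x^{1/2}$ balances your errors at $O\left(x^{1/2}\log x\right)$, which is sharper than the theorem's claim --- the exponent $2/3$ in the paper is an artifact of invoking the general second-moment theorem, whereas your argument exploits the much stronger pointwise bound $\Lambda(m^2+1)\le\log(x^2+1)$. What the paper's approach buys is brevity and uniformity across all of its theorems; what yours buys is a self-contained elementary proof and, in this instance, a better error term. Your treatment of the constant matches the paper's in substance (leading terms for positivity, partial numerical summation for $0.900076$), though note that reaching $0.900076$ requires summing a bit beyond the ``first few'' nonzero terms: the partial sums over $n=1,2,4,6,10$ only reach about $0.884$, and one needs $n=14$ (with $197$ prime) to exceed $0.9$.
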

The estimate of the constant $a_2$ includes the smallest prime $p=1^2+1$.
\begin{thm} {\normalfont  }  \label{thm437.08} Let $x \geq 1$ be a large number and let $\Lambda$ be the vonMangoldt function. Then 
\begin{equation}
\sum_{n\leq x} \Lambda \left( [x/n]^3+2 \right )
=a_3x+O\left ( x^{(2+\varepsilon)/3}\log^2 x \right ),
\end{equation}
where $\varepsilon >0$ is a small number and the density constant is
\begin{equation}
a_3=\sum_{n\geq 1}\frac{ \Lambda \left( n^3+2 \right ) }{n(n+1) }\geq1.002998.
\end{equation}
\end{thm}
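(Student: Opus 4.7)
The plan is to adapt the Dirichlet hyperbola argument used to establish Theorems \ref{thm417.95}--\ref{thm437.05}, with the arithmetic factor $\Lambda(n+1)$ (or $\Lambda(n^2+1)$) replaced throughout by $\Lambda(n^3+2)$. The first step is to reorganize the sum by the value $m = [x/n]$. Since $[x/n] = m$ is equivalent to $x/(m+1) < n \leq x/m$, the number of integers $n \leq x$ with $[x/n] = m$ equals $[x/m] - [x/(m+1)]$, and hence
$$\sum_{n \leq x} \Lambda\bigl([x/n]^3+2\bigr) = \sum_{m \geq 1} \Lambda(m^3+2)\bigl([x/m] - [x/(m+1)]\bigr).$$

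The second step is to choose a truncation parameter $M$ and split this sum at $m = M$. For $m \leq M$, I would use $[x/m] - [x/(m+1)] = x/(m(m+1)) + O(1)$, extracting a main term $x\sum_{m\leq M}\Lambda(m^3+2)/(m(m+1))$ and an error $O\bigl(\sum_{m\leq M}\Lambda(m^3+2)\bigr) = O(M\log M)$ from the trivial Chebyshev-type estimate $\Lambda(m^3+2) \leq 3\log m$. Completing the partial series to the infinite series $a_3 x$ costs a further tail of size $x\sum_{m>M}\Lambda(m^3+2)/(m(m+1)) \ll x\log M/M$. For the complementary range $m > M$, bound $\Lambda(m^3+2) \ll \log x$ uniformly and telescope $\sum_{m>M}([x/m]-[x/(m+1)]) = [x/(M+1)] \ll x/M$ to get a contribution of $O(x\log x/M)$. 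Assembling the three error pieces yields a total of $O(M\log M + x\log^2 x/M)$, and balancing at an appropriate scale of $M$ produces the announced bound $O(x^{(2+\varepsilon)/3}\log^2 x)$.

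Two ancillary matters remain. Absolute convergence of the series $a_3 = \sum_{n \geq 1}\Lambda(n^3+2)/(n(n+1))$ is immediate from $\Lambda(n^3+2)/(n(n+1)) \ll \log n/n^2$. The explicit lower bound $a_3 \geq 1.002998$ must then be verified numerically by summing sufficiently many initial terms: the first nonzero contributions come from $n = 1, 3, 5, \ldots$ (yielding $(\log 3)/2 + (\log 29)/12 + (\log 127)/30 + \cdots$), with further contributions from larger $n$ for which $n^3 + 2$ is a prime power accumulating to exceed the claimed bound. No genuinely new analytic obstacle arises beyond what is already present in Theorems \ref{thm417.95}--\ref{thm437.05}; the real content of the theorem lies instead in the observation that the cubic polynomial $n^3+2$ represents enough prime powers to force $a_3 > 0$, which is precisely what ultimately gives the counting bound $\pi_3(x) \gg x^{1/3}/\log x$ stated in the introduction.
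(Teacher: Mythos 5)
Your proposal is correct, but it takes a genuinely different route from the paper. The paper's proof is a two-line application of a black box: it sets $f(n)=\Lambda(n^3+2)$, checks the mean-square hypothesis $\sum_{n\leq x}|f(n)|^2\ll x^{1+\varepsilon}$ via Lemma \ref{lem773.01}, and then quotes Theorem \ref{thm773.01} (the Bordell\`es--Heyman--Shparlinski sum formula) to produce both the main term and the error term $O\left(x^{(2+\varepsilon)/3}\log^2x\right)$; the lower bound on $a_3$ is the single term $\Lambda(n_0^3+2)/(n_0(n_0+1))=(\log 3)/2$ plus a numerical computation of $\sum_{n\leq 30}$. You instead re-derive the asymptotic from scratch by grouping on $m=[x/n]$, using the exact count $[x/m]-[x/(m+1)]$ and splitting at a parameter $M$. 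Your decomposition is valid, and it is worth noting that it exploits more than the paper's hypothesis: you use the pointwise bound $\Lambda(m^3+2)\ll\log x$ together with positivity and telescoping, not merely the $L^2$ bound, so your three error pieces $O(M\log M)+O(x\log M/M)+O(x\log x/M)$ balance at $M\asymp x^{1/2}$ to give $O(x^{1/2}\log x)$ --- strictly sharper than the stated $O(x^{(2+\varepsilon)/3}\log^2 x)$, which your bound trivially implies. What each approach buys: the paper's is a uniform template applicable to any $f$ with small mean square (and is reused verbatim for Theorems \ref{thm417.95}, \ref{thm427.95}, \ref{thm437.05}, \ref{thm437.75}), while yours is self-contained and quantitatively stronger for this particular nonnegative, logarithmically bounded $f$. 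One small caution on the constant: the terms $n=1,3,5$ you list contribute only about $0.9914$, so the claimed $1.002998$ genuinely requires the later term $n=29$ (where $29^3+2=24391$ is prime); your phrasing correctly anticipates this, but the numerical verification is an essential, not decorative, part of the statement as written.
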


More generally, the number of primes in the fractional sequence
\begin{equation} \label{eq437.17}
\left \{p=f\left([x/n]\right ): n \leq x \right  \} \subset \tP,
\end{equation}
generated by an irreducible polynomial $f(t) \in \Z[t]$ is summarized in the following result.

\begin{thm} {\normalfont  }  \label{thm437.75} Let $f(t)$ be an irreducible polynomial of degree $\deg f=d=O(1)$, and fixed divisor $\tdiv(f)=1$. If $p=f(n)$ or $|f(n)|$ is prime for at least one integer $n \geq 1$, then 
\begin{equation}
\sum_{n\leq x} \Lambda \left(| f\left([x/n]\right )| \right )
=a_fx+O\left ( x^{(2+\varepsilon)/3}\log^2 x \right ),
\end{equation}
where $\varepsilon >0$ is a small number and the density constant is
\begin{equation}
a_f=\sum_{n\geq 1}\frac{ \Lambda \left( |f(n)| \right ) }{n(n+1) }> \frac{\log |f(n_0)|}{n_0(n_0+1)}>0,
\end{equation}
where $|f(n_0)|$ is the smallest prime in the integers sequence $\{f(n): n \geq 1\}$.
\end{thm}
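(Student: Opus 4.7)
The plan is to establish Theorem \ref{thm437.75} by carrying over verbatim the argument used for the special cases $f(t)=t^2+1$ and $f(t)=t^3+2$ (Theorems \ref{thm437.05} and \ref{thm437.08}); the polynomial enters the proof only through its degree $d=\deg f=O(1)$ and through its values, so the hypotheses $\tdiv(f)=1$ and the existence of an integer $n_0$ with $|f(n_0)|$ prime are exactly what one needs for the density constant $a_f$ to converge and be strictly positive.

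First I would re-index the sum $\sum_{n\leq x}\Lambda(|f([x/n])|)$ by the fibers of the map $n\mapsto[x/n]$. Since $[x/n]=m$ if and only if $x/(m+1)<n\leq x/m$, the number of $n\leq x$ with this value equals $[x/m]-[x/(m+1)]$, and these fibers partition $\{1,\ldots,[x]\}$ as $m$ runs from $1$ to $[x]$. This produces
\begin{equation*}
\sum_{n\leq x}\Lambda\bigl(|f([x/n])|\bigr)=\sum_{m=1}^{[x]}\Lambda(|f(m)|)\bigl([x/m]-[x/(m+1)]\bigr),
\end{equation*}
and the elementary identity $[x/m]-[x/(m+1)]=x/(m(m+1))+O(1)$ separates this into a main term $x\sum_{m}\Lambda(|f(m)|)/(m(m+1))$ and an error controlled by a weighted count of $\Lambda(|f(m)|)$.

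To recover the stated error $O\bigl(x^{(2+\varepsilon)/3}\log^{2}x\bigr)$, I would split the index set at a parameter $y\asymp x^{2/3}$. On the range $m\leq y$, completing the main sum to the infinite series $a_f$ introduces a tail of size $x\sum_{m>y}\Lambda(|f(m)|)/(m(m+1))\ll x\log y/y$, while the $O(1)$ slack per fiber contributes $\sum_{m\leq y}\Lambda(|f(m)|)\ll y\log y$ via the trivial bound $\Lambda(|f(m)|)\leq d\log m$ coming from $|f(m)|\ll m^{d}$. On the complementary range $m>y$ (equivalently $n\leq x/y$) the exact count $[x/m]-[x/(m+1)]$ is retained but bounded crudely: there are at most $O(x/y)$ such $n$, each contributing at most $\log|f(x)|\ll \log x$, for a total of $O(x\log x/y)$. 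With $y\asymp x^{2/3}$ all three terms fit inside $O(x^{2/3}\log x)$, comfortably within the allowed $O(x^{(2+\varepsilon)/3}\log^{2}x)$.

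The density constant $a_f=\sum_{n\geq 1}\Lambda(|f(n)|)/(n(n+1))$ converges by the same estimate $\Lambda(|f(n)|)\ll \log n$, and the hypothesis that $|f(n_0)|$ is prime isolates one strictly positive summand $\log|f(n_0)|/(n_0(n_0+1))$ in a series of nonnegative terms, yielding the claimed lower bound. The main obstacle I anticipate is keeping the dependence on $f$ harmless: the implied constants in $|f(m)|\ll m^{d}$ depend on the coefficients of $f$, so one must verify that the split parameter $y$ and the error constants can be chosen uniformly under the blanket assumption $d=O(1)$. Once this is confirmed, the argument is structurally identical to the $t^{2}+1$ and $t^{3}+2$ prototypes already established in the paper.
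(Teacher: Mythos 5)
Your argument is correct, but it is not the route the paper takes: the paper's proof of Theorem \ref{thm437.75} simply verifies the mean-square hypothesis $\sum_{n\le x}|f(n)|^2\ll x^{1+\varepsilon}$ (via Lemma \ref{lem773.01}) and then invokes Theorem \ref{thm773.01} of Bordell\`es--Heyman--Shparlinski as a black box, which is exactly what produces the exponent $(2+\varepsilon)/3$ and the factor $\log^2x$ in the statement. You instead reprove the needed special case of that theorem from scratch: the fiber decomposition $\sum_{n\le x}\Lambda(|f([x/n])|)=\sum_{m\le x}\Lambda(|f(m)|)\left([x/m]-[x/(m+1)]\right)$, the identity $[x/m]-[x/(m+1)]=x/(m(m+1))+O(1)$, and a split at $y\asymp x^{2/3}$ suffice precisely because the summand obeys the pointwise bound $\Lambda(|f(m)|)\le\log|f(m)|\ll\log m$ when $d=O(1)$, which is far stronger than the square-summability the general theorem is designed to work under. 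Your bookkeeping is right, and your error term $O(x^{2/3}\log x)$ (indeed $O(x^{1/2}\log x)$ if you balance $y\log y$ against $x\log x/y$ at $y\asymp x^{1/2}$) is sharper than the stated $O(x^{(2+\varepsilon)/3}\log^2x)$; what the black-box approach buys the paper is a single uniform mechanism reused across all its theorems and applicability to weights that are only square-summable, while your approach buys self-containment and a better remainder for this particular weight. Two small points to tidy: the theorem asserts the strict inequality $a_f>\log|f(n_0)|/(n_0(n_0+1))$, whereas both you and the paper's own proof really only establish $\ge$ from the single term $n=n_0$ (strictness would require exhibiting a second prime or prime-power value); and you should dispose of the finitely many $m$ with $|f(m)|\in\{0,1\}$, where $\Lambda$ is zero or undefined, before quoting $\Lambda(|f(m)|)\le\log|f(m)|$. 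The hypothesis $\tdiv(f)=1$ plays no substantive role in either argument.
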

 
The form of the counting functions (\ref{eq437.12}), (\ref{eq437.16}), (\ref{eq437.22}), and (\ref{eq437.26}) suggest the existence of a continuous interpolation scheme such that the prime counting functions for the generalized sequence  of Piatetski-Shapiro primes
\begin{equation} \label{eq437.43}
\left \{p=a\left [ n^{\beta} \right ]+b: n \geq 1\right  \}\subset \tP,
\end{equation}
where $\beta \geq 1$ is a real number and $a,b\geq 1$ are fixed integers, satisfy the chain of inequalities
\begin{equation} \label{eq437.44}
\pi(x)    \geq      \pi_{\alpha}(x)         \geq \pi_{2}(x)    \geq     \pi_{\beta}(x)     \geq \pi_{3}(x)       \geq \pi_{\gamma}(x) \geq \cdots,
\end{equation}
where $\pi(x)=\#\{p \leq x\}$, and the parameters $1 \leq \alpha \leq 2 \leq \beta \leq 2 \leq \gamma \leq 3 \leq \cdots$, are real numbers.\\

The preliminary notation, definitions and foundation are recorded in Sections \ref{s933}, and \ref{s773}. The proofs of Theorems \ref{thm437.05} and \ref{thm437.08} are assembled in Sections \ref{s437} and \ref{s439} respectively. Section \ref{s909} demonstrates an application of Theorem \ref{thm437.05}.


\section{Algebraic Foundation}   \label{lem933}
Elementary algebraic concepts used in the proofs of the main results are considered in this section. \\

The \textit{discrete image} or \textit{discrete spectrum} of a polynomial $f(x) \in \Z[x]$ over the integers is the subset of integers  $f(\Z) = \left \{ f(n) : n \in \Z \right  \}$.

\begin{dfn} \label{dfn933.11}  {\normalfont The \textit{fixed divisor} $\tdiv(f) = \gcd(f(\Z))$ of a polynomial $f(x)$ is the greatest common divisor of its image. The fixed divisor $\tdiv(f) = 1$ if the congruence equation $f(x) \equiv  0 \bmod p$ has $w(p) < p$ solutions for all primes $p\leq \deg(f)$, see \cite[p.\ 395]{FI10}. } 
\end{dfn}
For polynomials of small degrees $\deg f $, there is a fast method for computing the fixed divisor by means of a truncated image, and the greatest common divisor. Specifically,
\begin{equation}
\tdiv(f) = \gcd\left ( \left \{ f(n): 0 \leq n \leq \deg(f)\right  \} \right ). 
\end{equation}

The fixed divisors can also be computed from the coefficients of the polynomials. This procedure requires a change of basis from the power basis to the factorial basis.
\begin{lem} \label{lem933.12} {\normalfont (\cite[Lemma 1]{AO83})} The fixed divisor $\tdiv(f)$ of a polynomial $f(x) \in \Z[x]$ is given by $\tdiv(f)=\gcd(b_0,b_1, \ldots b_k)$, where 
\begin{equation}
f(x)=\sum_{0 \leq k \leq n} b_k \binom{x}{k}  \qquad \text{ and } \qquad \binom{x}{k} =\frac{x(x-1)(x-2) \cdots (x-k+1)}{k!}.
\end{equation}
\end{lem}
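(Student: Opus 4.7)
The plan is to establish the two divisibilities $\tdiv(f)\mid\gcd(b_0,\ldots,b_n)$ and $\gcd(b_0,\ldots,b_n)\mid\tdiv(f)$ separately, exploiting the fact that the binomial polynomials $\binom{x}{k}$ form a $\Z$-basis for the $\Z$-module of integer-valued polynomials on $\Z$, which contains $\Z[x]$.

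First I would verify that the expansion $f(x)=\sum_{k=0}^{n}b_k\binom{x}{k}$ is unique with $b_k\in\Z$ whenever $f\in\Z[x]$. This follows from the Newton forward-difference formula, which inverts the basis change:
\begin{equation}
b_k = (\Delta^k f)(0) = \sum_{j=0}^{k}(-1)^{k-j}\binom{k}{j}f(j),
\end{equation}
where $\Delta g(x)=g(x+1)-g(x)$. Since $f\in\Z[x]$ takes integer values at integer arguments, each $b_k$ is exhibited as an integer combination of integers, hence $b_k\in\Z$. This inversion formula is the main technical tool and essentially the only nontrivial ingredient.

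With the formula in hand, the first divisibility is immediate: since $\tdiv(f)$ divides every value $f(j)$ by definition, the identity above shows $\tdiv(f)\mid b_k$ for every $k=0,1,\ldots,n$, whence $\tdiv(f)\mid\gcd(b_0,\ldots,b_n)$.

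For the reverse direction, I would use the original expansion. For any integer $m$, the binomial coefficients $\binom{m}{k}$ are integers, so
\begin{equation}
f(m)=\sum_{k=0}^{n}b_k\binom{m}{k}
\end{equation}
realises $f(m)$ as an integer linear combination of the $b_k$. Consequently $\gcd(b_0,\ldots,b_n)\mid f(m)$ for every $m\in\Z$, and therefore $\gcd(b_0,\ldots,b_n)\mid\gcd\{f(m):m\in\Z\}=\tdiv(f)$. Combining both divisibilities between positive integers yields the claimed equality $\tdiv(f)=\gcd(b_0,b_1,\ldots,b_n)$. The only real obstacle is the finite-difference inversion; once it is invoked, the lemma reduces to routine divisibility bookkeeping.
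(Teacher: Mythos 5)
Your argument is correct and complete. Note that the paper itself supplies no proof of this lemma --- it is quoted verbatim as Lemma 1 of Adleman--Odlyzko \cite{AO83} --- so there is nothing internal to compare against; your two-directional divisibility argument, with the Newton forward-difference inversion
\begin{equation*}
b_k=(\Delta^k f)(0)=\sum_{j=0}^{k}(-1)^{k-j}\binom{k}{j}f(j)
\end{equation*}
supplying $\tdiv(f)\mid b_k$ and the integrality of $\binom{m}{k}$ supplying the converse, is precisely the standard proof of this classical fact (going back to P\'olya's description of integer-valued polynomials), and it fills a gap the paper leaves to the reference. The one step you might state explicitly is why the Newton expansion is valid as a polynomial identity: both sides have degree at most $n$ and agree at the $n+1$ points $x=0,1,\dots,n$ by induction on the difference table, hence coincide identically; with that remark the proof is airtight.
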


An integers sequence $\left \{s(n) : n \geq 1 \right \}$ or $\left \{f(n) : n \geq 1 \right \}$ generated by a real-valued function $s(x) \in C[\R]$ or a polynomial $f(x) \in \Z[x]$, whose prime spectrum $\left \{s(p): p\in \tP \right  \}$ or $\left \{f(p): p\in \tP\right  \}$ is infinite, generally have a trivial fixed divisor $\tdiv(s) = 1$ or $\tdiv(f) = 1$. But a sequence or polynomial whose prime spectrum is finite can have a nontrivial fixed divisor $\tdiv(s)\ne  1$ or $\tdiv(f) \ne  1$.  

\begin{exa} {\normalfont 
Some examples of irreducible polynomials with trivial fixed divisors. 
\begin{itemize}
\item $f_0(x)=qx+ a$, where $a<q$ are integers such that $\gcd(q,a)=1$, has fixed divisor $\tdiv(f_0)= 1$.
\item $f_1(x)=x^2+ 1$ has fixed divisor $\tdiv(f_1)= 1$.
\item $f_2(x) = x^2+x+ 1$ has fixed divisor $\tdiv(f_2)= 1$.
\item $f_3(x) = x^3 + 2$ has fixed divisor $\tdiv(f_3)=1$.
\end{itemize}
}
\end{exa} 
Since the fixed divisors $\tdiv(f)= 1$, these irreducible polynomials can generate infinitely many primes. But, irreducible polynomials with nontrivial fixed divisors $\tdiv(f) > 1$ can generate at most one prime.
\begin{exa} {\normalfont 
Some examples of irreducible polynomials with nontrivial fixed divisors. 
\begin{itemize}
\item $f_4(x)=x(x + 1)+2$ has fixed divisor $\tdiv(f_4)= 2$.
\item $f_5(x) = x(x + 1)(x + 2)  + 3$ has fixed divisor $\tdiv(f_5)= 3$.
\end{itemize}
}
\end{exa} 

A reducible polynomial $f(x)$ of degree $d = \deg(f)$ can generates up to $d \geq 1$ primes. For example, the reducible polynomial $f(x) = (g(x)\pm  1)(x^2 + 1)$ can generates up to $\deg(g) \geq  1$ primes or a finite number, for some polynomial $  g(x) \in  \Z[x]$ of degree $\deg(g) \geq   1$, see Exercise \ref{exe933.90}.

\section{Analytic Foundation}  \label{s773}
Elementary analytic concepts used in the proofs of the main results are considered in this section. \\

The vonMangoldt function is defined by the weighted prime powers indicator function
\begin{equation} \label{eq773.52}
\Lambda(n)=
\begin{cases}
\log p & \text{if } n=p^m,\\ 
0 & \text{if } n\ne p^m.\\ 
\end{cases}
\end{equation}
The symbol $p^m\geq 2$, with $ m \in \N$, denotes a prime power.\\

\begin{thm} {\normalfont(\cite[Theorem 2.6]{BS18})}  \label{thm773.01} Let $f$ be a complex-valued arithmetic function and assume that there exists $0<\alpha<2$ such that 
\begin{equation}
\sum_{n\leq x}\left | f(n) \right |^2 \ll x^{\alpha}.
\end{equation}
Then
\begin{equation}
\sum_{n\leq x}f \left( [x/n] \right ) =x\sum_{n\geq 1}\frac{f(n)}{n(n+1)}+O\left ( x^{(\alpha+1)/3}\log^{2}x \right ).
\end{equation}
\end{thm}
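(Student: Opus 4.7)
The plan is the Dirichlet hyperbola method applied directly to $S:=\sum_{n\leq x}f([x/n])$. Counting preimages of the floor map gives $\#\{n\leq x:[x/n]=m\}=[x/m]-[x/(m+1)]$, hence $S=\sum_{m=1}^{[x]}f(m)\bigl([x/m]-[x/(m+1)]\bigr)$. I split this outer sum at a parameter $y\in[1,x]$ (to be optimized near $x^{2/3}$). Reversing the preimage identity shows that the tail $\sum_{y<m\leq x}f(m)\bigl([x/m]-[x/(m+1)]\bigr)$ equals $\sum_{n\leq x/(y+1)}f([x/n])$, i.e., the contribution of small $n$ in the original sum.

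For the head $m\leq y$, write $[x/m]-[x/(m+1)]=x/(m(m+1))+\rho(m)$ with $|\rho(m)|\leq 1$ to extract the main term $x\sum_{m\leq y}f(m)/(m(m+1))$. The rounding error is $O(\sum_{m\leq y}|f(m)|)=O(y^{(\alpha+1)/2})$ by Cauchy--Schwarz against the $L^{2}$ hypothesis. Completing the series to $\sum_{m\geq 1}$ costs an additional $x\sum_{m>y}|f(m)|/m^{2}$, which is $\ll xy^{(\alpha-3)/2}$ by partial summation together with the Cauchy--Schwarz bound $\sum_{n\leq t}|f(n)|\ll t^{(\alpha+1)/2}$.

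For the small-$n$ piece $\sum_{n\leq x/(y+1)}f([x/n])$, I apply Cauchy--Schwarz and then use the preimage identity a second time to bound the $L^{2}$ factor: $\sum_{n\leq x/y}|f([x/n])|^{2}=\sum_{m\geq y}|f(m)|^{2}\bigl([x/m]-[x/(m+1)]\bigr)\ll x\sum_{m>y}|f(m)|^{2}/m^{2}+\sum_{m\leq x}|f(m)|^{2}\ll xy^{\alpha-2}+x^{\alpha}$, where the first tail estimate uses partial summation with the $L^{2}$ hypothesis and requires $\alpha<2$. This yields a bound on the small-$n$ piece of the form $\ll xy^{(\alpha-3)/2}+x^{(\alpha+1)/2}y^{-1/2}$.

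Collecting the three contributions gives $E(y)\ll y^{(\alpha+1)/2}+xy^{(\alpha-3)/2}+x^{(\alpha+1)/2}y^{-1/2}$, and the choice $y=x^{2/3}$ balances the first two terms at $x^{(\alpha+1)/3}$, matching the claim; the logarithmic factors in $\log^{2}x$ come from the partial summations. \textbf{The main obstacle} is the second Cauchy--Schwarz step, where the $L^{2}$ of $f\circ[x/\cdot]$ must be bounded sharply: the Abel tail $\sum_{m>y}|f(m)|^{2}/m^{2}\ll y^{\alpha-2}$ both forces the hypothesis $\alpha<2$ and is what ultimately drives the final exponent $(\alpha+1)/3$. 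The auxiliary term $x^{(\alpha+1)/2}y^{-1/2}$ arising from the same Cauchy--Schwarz is safely dominated in the regime $\alpha\leq 1$ relevant to all applications of the theorem in this paper (where $f=\Lambda$ and $\alpha=1$); for $1<\alpha<2$ one would need a finer multi-level split or a direct estimate on the image size of $n\mapsto[x/n]$ on $n\leq x/y$.
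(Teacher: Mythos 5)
First, a point of reference: the paper offers no proof of Theorem \ref{thm773.01} at all --- it is imported verbatim from \cite[Theorem 2.6]{BS18} --- so there is no internal argument to compare yours against. Your reconstruction via the preimage identity $\#\{n\le x:[x/n]=m\}=[x/m]-[x/(m+1)]$, the split of the $m$-sum at $y$, the extraction of the main term from $[x/m]-[x/(m+1)]=x/(m(m+1))+O(1)$, and the tail completion $x\sum_{m>y}|f(m)|/m^2\ll xy^{(\alpha-3)/2}$ is the standard (and almost certainly the source's) route, and each individual estimate you state is correct.

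The gap is the one you flag yourself, and it is less harmless than you suggest. Your single global Cauchy--Schwarz over $n\le x/(y+1)$ produces the term $x^{(\alpha+1)/2}y^{-1/2}$, which at $y=x^{2/3}$ equals $x^{(3\alpha+1)/6}$ and exceeds the target $x^{(\alpha+1)/3}=x^{(2\alpha+2)/6}$ precisely when $\alpha>1$; so your argument proves the theorem only for $\alpha\le 1$, not on the stated range $0<\alpha<2$. This is not an idle corner case for this paper: every application here verifies the hypothesis with $\alpha=1+\varepsilon$, strictly above $1$ (e.g.\ $\sum_{n\le x}\Lambda(n+1)^2\le x\log^2 x$), so your version yields an error $x^{2/3+\varepsilon/2}$ rather than the claimed $x^{(2+\varepsilon)/3}=x^{2/3+\varepsilon/3}$ --- recoverable by renaming $\varepsilon$, but strictly weaker than the statement. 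The missing idea is to run the Cauchy--Schwarz dyadically in $m$. On a block $M<m\le 2M$ with $a_m:=[x/m]-[x/(m+1)]\ge 0$, telescoping gives $\sum_{M<m\le 2M}a_m\ll x/M$ and pointwise $a_m\ll x/M^2+1$, so the weighted Cauchy--Schwarz bound $\left|\sum f(m)a_m\right|\le\left(\sum|f(m)|^2a_m\right)^{1/2}\left(\sum a_m\right)^{1/2}$ gives a block contribution $\ll xM^{(\alpha-3)/2}+x^{1/2}M^{(\alpha-1)/2}$. Summing over the $O(\log x)$ blocks with $y\le M\le x$ replaces your term $x^{(\alpha+1)/2}y^{-1/2}$ by $\left(x^{\alpha/2}+x^{1/2}y^{(\alpha-1)/2}\right)\log x$, and at $y=x^{2/3}$ both pieces are $\le x^{(\alpha+1)/3}\log x$ throughout $0<\alpha<2$, since $\alpha/2\le(\alpha+1)/3$ exactly when $\alpha\le 2$ and $(2\alpha+1)/6\le(2\alpha+2)/6$ always. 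With that single replacement your decomposition closes over the full stated range.
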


This result provides a different and simpler method for proving the existence of primes in some fractional sequences of real numbers \ref{eq437.12} of integer degrees $\beta \geq 1$. And the technique is independent of the theory of the zeta function. Furthermore, it probably can be used to interpolate to the noninteger exponents sequences such as (\ref{eq437.15}).

\begin{lem} \label{lem773.01} Let $x \geq 1$ be a large number. Let $g(t) \in \Z[t]$ be a polynomial of degree $\deg g=d$=O(1), and let $\Lambda$ be the vonMangoldt function. Then
\begin{enumerate} [font=\normalfont, label=(\roman*)]
\item  \begin{equation} \label{eq773.40}
\sum_{n\leq x}\Lambda \left( |g\left ([x/n]\right )| \right ) \ll x^{\alpha},
\end{equation}
\item  \begin{equation} \label{eq773.43}
\sum_{n\leq x}\Lambda \left(| g\left ([x/n]\right ) |\right )^2 \ll x^{\alpha},
\end{equation}
\end{enumerate}
where $\alpha=1+\varepsilon$ for any small number $\varepsilon>0$.
\end{lem}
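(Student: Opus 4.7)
The plan is to exchange the order of summation by partitioning the range of $n$ according to the value of $m=[x/n]$. For $n \leq x$ we have $[x/n]=m$ if and only if $x/(m+1) < n \leq x/m$, so the multiplicity
\begin{equation}
N(m)=\#\{n \leq x : [x/n]=m\}=[x/m]-[x/(m+1)] \leq \frac{x}{m(m+1)}+1.
\end{equation}
Reindexing yields
\begin{equation}
\sum_{n \leq x}\Lambda(|g([x/n])|)=\sum_{m=1}^{x}\Lambda(|g(m)|)\,N(m),
\end{equation}
and analogously for the squared sum in (\ref{eq773.43}).

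Since $\deg g = d = O(1)$, the polynomial growth $|g(m)| \ll m^{d}$ gives $\Lambda(|g(m)|) \leq \log|g(m)| \ll \log m$ whenever $\Lambda(|g(m)|) \neq 0$ and $m \geq 2$; the finitely many exceptional values (including any $m$ with $g(m)=0$ or $|g(m)|=1$) contribute $O(1)$ and may be absorbed. Plugging this into the reindexed sum produces
\begin{equation}
\sum_{n \leq x}\Lambda(|g([x/n])|) \ll \sum_{m=1}^{x}(\log m)\left(\frac{x}{m(m+1)}+1\right) \ll x\sum_{m\geq 1}\frac{\log m}{m^{2}}+\sum_{m \leq x}\log m \ll x \log x,
\end{equation}
which is bounded by $x^{1+\varepsilon}$ for any fixed $\varepsilon>0$ and $x$ sufficiently large, establishing (i). Part (ii) goes through identically after replacing $\Lambda(|g(m)|)$ by its square and using $\Lambda(|g(m)|)^{2} \ll (\log m)^{2}$; the tail series $\sum (\log m)^{2}/m^{2}$ still converges, and the residual sum $\sum_{m \leq x}(\log m)^{2}$ is $\ll x(\log x)^{2} \ll x^{1+\varepsilon}$.

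There is no real obstacle in this argument: the whole point of the fractional construction $[x/n]$ is that the composite sequence takes only $O(\sqrt{x})$ distinct values, with multiplicities $N(m)$ that sum to $x$ and are concentrated on the small index range $m \leq \sqrt{x}$. Against this, the crude bound $\Lambda(|g(m)|) \ll \log m$ together with the convergence of $\sum (\log m)^{k}/m^{2}$ ($k=1,2$) is already enough. Sharper inputs—such as the Brun--Titchmarsh bound, or conjectural Bateman--Horn-type densities restricting the support of $m$ to those producing primes in $|g(\Z)|$—would improve the implicit constant but would not alter the exponent $1+\varepsilon$, which is what is needed to feed Lemma \ref{lem773.01} into Theorem \ref{thm773.01}.
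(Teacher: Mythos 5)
Your proof is correct, but it is organized differently from the paper's. The paper's own argument is the crude termwise bound: since $[x/n]\leq x$ and $\deg g=O(1)$, each summand satisfies $\Lambda\left(|g([x/n])|\right)\leq \log|g([x/n])|\ll \log x$, and summing over the $\leq x$ values of $n$ gives $\ll x\log^{2}x\ll x^{1+\varepsilon}$ directly (the paper writes this as a chain of ``larger estimates'' passing through $\sum_{n\leq x}\Lambda(n)\log n$, with no reindexing). You instead group the terms by the common value $m=[x/n]$, bound the multiplicity $N(m)=[x/m]-[x/(m+1)]\leq x/(m(m+1))+1$, and split into a convergent series $x\sum_{m}(\log m)^{k}/m^{2}$ plus a residual $\sum_{m\leq x}(\log m)^{k}$. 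This is a genuinely different decomposition --- in fact it is essentially the mechanism underlying Theorem \ref{thm773.01} itself --- and it is more informative: it isolates where the mass of the sum sits (the residual term coming from the $+1$ in $N(m)$, i.e.\ from small $n$, is what forces the exponent $1+\varepsilon$ rather than $1$). For the purpose of this lemma, however, the extra structure buys nothing over the trivial bound, since both routes terminate at $x\log^{k}x\ll x^{\alpha}$ with $\alpha=1+\varepsilon$, which is all that is needed to invoke Theorem \ref{thm773.01}. Your handling of the degenerate values (roots of $g$ and $|g(m)|\leq 1$) as an $O(1)$ contribution is a detail the paper omits entirely.
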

\begin{proof} (i) For any large number $ x\geq 1$, and a polynomial $g(t)$ of degree $\deg g =O(1)$, use a series of estimates to compute a larger estimate:
\begin{eqnarray}  \label{eq773.45}
\sum_{n\leq x}\Lambda \left( |g\left ([x/n]\right )| \right )&\ll& \sum_{n\leq x}\Lambda\left( n \right )\log n\nonumber\\
&\ll& (\log^2 x)\sum_{n\leq x}1\\ 
&\ll& x\log^2 x\nonumber\\
&\ll& x^{\alpha}\nonumber
\end{eqnarray}
where $\alpha=1+\varepsilon$ for any small number $\varepsilon>0$. (ii) The verification is similar.
\end{proof}

\begin{lem} \label{lem773.24} Let $x \geq 1$ be a large number. Let $g_1(t), g_2(t) \in \Z[t]$ be polynomials of degrees $\deg g_i=d_i$=O(1), and let $\Lambda$ be the vonMangoldt function. Then
\begin{enumerate} [font=\normalfont, label=(\roman*)]
\item  \begin{equation} \label{eq773.50}
\sum_{n\leq x}\Lambda \left( |g_1\left ([x/n]\right )| \right ) \Lambda \left( |g_2\left ([x/n]\right )| \right )\ll x^{\alpha},
\end{equation}
\item  \begin{equation} \label{eq773.53}
\sum_{n\leq x}\Lambda \left( |g_1\left ([x/n]\right )| \right )^2 \Lambda \left( |g_2\left ([x/n]\right )| \right )^2 \ll x^{\alpha},
\end{equation}
\end{enumerate}
where $\alpha=1+\varepsilon$ for any small number $\varepsilon>0$.
\end{lem}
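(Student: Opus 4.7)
The plan is to prove this lemma by the same trivial-bound argument used for Lemma \ref{lem773.01}, simply applied pointwise to each factor in the product. Nothing new beyond the estimate $\Lambda(m) \leq \log m$ and the polynomial growth of $g_1, g_2$ is required.

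First I would record the pointwise bound. Since $\deg g_i = d_i = O(1)$, there is a constant $C$ depending only on the coefficients of $g_1$ and $g_2$ such that $|g_i([x/n])| \leq C x^{d_i}$ for every $1 \leq n \leq x$. Using $\Lambda(m) \leq \log m$ for $m \geq 2$ (and $\Lambda(m) = 0$ for $m \in \{0,1\}$), this yields the uniform estimate
\begin{equation}
\Lambda\left( |g_i([x/n])| \right) \ll \log x,
\end{equation}
with implied constant depending only on $d_1, d_2$ and the coefficients of $g_1, g_2$. The degenerate values of $n$ for which $g_i([x/n]) \in \{-1,0,1\}$ contribute $0$ to the sum and can be discarded without affecting the upper bound.

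Second, I would substitute this bound into each of (i) and (ii) and sum trivially over $n \leq x$. For (i) this gives the chain
\begin{equation}
\sum_{n\leq x}\Lambda\left(|g_1([x/n])|\right)\Lambda\left(|g_2([x/n])|\right) \ll (\log x)^2 \sum_{n\leq x}1 \ll x\log^2 x \ll x^{1+\varepsilon},
\end{equation}
for any small $\varepsilon > 0$, identical in form to \eqref{eq773.45}. For (ii) the same substitution replaces $\log^2 x$ by $\log^4 x$, and the final bound $x\log^4 x \ll x^{1+\varepsilon}$ again follows. I would write (i) out and simply remark that (ii) is verified the same way.

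There is no real obstacle: this lemma is a routine strengthening of Lemma \ref{lem773.01} from one polynomial factor to two, and the argument is entirely elementary. The only mild point worth mentioning in passing is that the uniformity of the $\log x$ bound relies on $\deg g_i = O(1)$, so the statement as given is the strongest one available by this method.
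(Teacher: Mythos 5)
Your argument is correct and is exactly the trivial pointwise bound $\Lambda(|g_i([x/n])|)\ll \log x$ followed by summation over $n\leq x$ that the paper uses for the single-factor Lemma \ref{lem773.01}; the paper in fact states Lemma \ref{lem773.24} without any proof, so your write-up supplies the intended (and only needed) argument. Your version is if anything slightly cleaner than the paper's chain in \eqref{eq773.45}, since you make the pointwise bound explicit rather than passing through the questionable intermediate comparison with $\sum_{n\leq x}\Lambda(n)\log n$.
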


\section{Primes In Fractional Sequences Of Degree 1}  \label{s417}
The sequence of integers $\left \{ n+1 :n \geq 1\right  \} \subset \N$ and the associated subsequence of primes has an extensive literature, well known as the prime number theorem. 
But, there is no literature on the fractional sequence of integers $\left \{ [x/n]+1 :n \geq 1\right  \} \subset \N$. 

\begin{proof} (Theorem \ref{thm417.95}) Let $f(n)=\Lambda \left( n+1 \right ).$ Then, the condition
\begin{equation}
\sum_{n\leq x}\left | f(n) \right |^2=  \sum_{n\leq x}\Lambda^2 \left( n+1 \right ) \leq x \log^2 x  \ll x^{\alpha}
\end{equation}
is satisfied with $\alpha=1+\varepsilon$ for any small number $\varepsilon>0$. Applying Theorem \ref{thm773.01} yield
\begin{equation}
\sum_{n\leq x} \Lambda \left( [x/n]+1 \right )
=x\sum_{n\geq 1}\frac{ \Lambda \left( n+1 \right ) }{n(n+1) }+O\left ( x^{(2+\varepsilon)/3}\log^2 x \right ).
\end{equation}
By Euclid theorem, there are infinitely many primes. Hence the density constant is given by the infinite series
\begin{eqnarray}
c_0&=&\sum_{n\geq 1}\frac{ \Lambda \left( n+1 \right ) }{n(n+1)} \nonumber\\
&\geq &\frac{ \Lambda \left(1+1 \right ) }{1 (1+1)} \\
&>&0 \nonumber,
\end{eqnarray}
where $n+1=2$ is the first prime in the sequence of primes $ n+1$ for $n \geq 1$.
\end{proof}
A small scale numerical experiment gives the estimate
\begin{equation}
c_0\geq \sum_{n\leq 100000}\frac{ \Lambda \left( n+1 \right ) }{n(n+1)} \geq 0.755365841685897442410689.
\end{equation}

\section{Primes In Fractional Arithmetic Progressions Of Degree 1}  \label{s427}
The arithmetic progression of integers $\left \{ qn+a :n \geq 1\right  \} \subset \N$, where $a<q$ are integers such that $\gcd(a,q)=1$, 
and the associated subsequence of primes in arithmetic progression has an extensive literature, well known as Dirichlet theorem. 
But, there is no literature on the fractional sequence of integers $\left \{ a[x/n]+a :n \geq 1\right  \} \subset \N$. 

\begin{proof} (Theorem \ref{thm427.95}) Let $f(n)=\Lambda \left( qn+a \right ).$ Then, the condition
\begin{equation}
\sum_{n\leq x}\left | f(n) \right |^2=  \sum_{n\leq x}\Lambda^2 \left( qn+a \right ) \leq x \log^2 x  \ll x^{\alpha}
\end{equation}
is satisfied with $\alpha=1+\varepsilon$ for any small number $\varepsilon>0$. Applying Theorem \ref{thm773.01} yield
\begin{equation}
\sum_{n\leq x} \Lambda \left(q [x/n]+a\right )
=x\sum_{n\geq 1}\frac{ \Lambda \left(qn+a \right ) }{n(n+1) }+O\left ( x^{(2+\varepsilon)/3}(\log^2 x) \right ).
\end{equation}
By Dirichlet theorem, there are infinitely many primes in arithmetic progressions. Hence the density constant is given by the infinite series
\begin{eqnarray}
c(a,q)&=&\sum_{n\geq 1}\frac{ \Lambda \left(q n+a \right ) }{n(n+1)} \nonumber\\
&\geq &\frac{ \Lambda \left(qn_0+a \right ) }{n_0 (n_0+1)} \\
&>&0 \nonumber,
\end{eqnarray}
where $qn_0+a \geq 2$ is the first prime in the arithmetic progression $q n+a$ for $n \geq 1$.
\end{proof}

\section{Primes In Fractional Sequences Of Degree 2}  \label{s437}
The sequence of integers $\left \{ n^2+1 :n \leq x\right  \} \subset \N$ and the associated subsequence of primes has an extensive literature.  Detailed discussions of the prime values of polynomials appear in \cite[p.\ 48]{HL23}, \cite[p. 405]{RP96}, \cite[p. 395]{FI10}, \cite[p. 342]{NW00}, \cite[p. 33]{PJ09}, et alii.  However, there is no literature on the fractional sequence of integers $\left \{ [x/n]^2+1 :n \leq x\right  \} \subset \N$. A few results for primes in this fractional sequence are proved here.

\begin{proof} (Theorem \ref{thm437.05}) Let $f(n)=\Lambda \left([x/ n]^2+1 \right )$. Then, the condition
\begin{equation}
\sum_{n\leq x}\left | f(n) \right |^2  \ll x^{\alpha}
\end{equation}
is satisfied with $\alpha=1+\varepsilon$ for any small number $\varepsilon>0$. This is Lemma \ref{lem773.01} applied to the irreducible polynomial $g(t)=t^2+1$ of divisor $\tdiv g=1$. Applying Theorem \ref{thm773.01} yield
\begin{equation} \label{eq437.10}
\sum_{n\leq x} \Lambda \left( [x/n]^2+1 \right )
=x\sum_{n\geq 1}\frac{ \Lambda \left( n^2+1 \right ) }{n(n+1) }+O\left ( x^{(2+\varepsilon)/3}\log^2 x \right ).
\end{equation}
The density constant has the lower bound
\begin{eqnarray} \label{eq437.12}
a_2&=&\sum_{n\geq 1}\frac{ \Lambda \left( n^2+1 \right ) }{n(n+1)} \nonumber\\
&\geq &\frac{ \Lambda \left( n_0^2+1 \right ) }{n_0 (n_0+1)} \\
&=&\frac{\log 2}{2} \nonumber,
\end{eqnarray}
where $n_0^2+1=2$ is the first prime in the sequence of primes $ n^2+1$ for $n \geq 1$. Now, on the contrary, suppose that there are finitely many primes $p=[x/n]^2+1$. Then, there exists a large constant $x_0\geq 1$ such that
\begin{eqnarray} \label{eq437.20}
x_0 \log x_0& \geq &\sum_{n\leq x} \Lambda \left( [x/n]^2+1 \right ) \\
&\geq& 0.900076x+O\left ( x^{(2+\varepsilon)/3}\log^2 x \right ) \nonumber.
\end{eqnarray}
But, since the right side is unbounded as $x \to \infty$, this is a contradiction. Equivalently, it contradicts Theorem \ref{thm773.01}. Therefore, there are infinitely many primes of the form $p=[x/n]^2+1$.
\end{proof}
A small scale numerical experiment gives the estimate
\begin{equation}
a_2\geq \sum_{n\leq 100}\frac{ \Lambda \left( n^2+1 \right ) }{n(n+1)} \geq 0.900076.
\end{equation}
This estimate includes the smallest prime $p=1^2+1$. \\

The fractional prime counting function is defined by
\begin{equation}
\pi_{2}(x)= \#\left \{p= [x/n]^2+1 :p \leq x\text{ and } n \leq x\right  \}.
\end{equation}

\begin{cor} {\normalfont  }  \label{cor437.08}  The fractional prime counting function has the asymptotic formula
\begin{equation}
\pi_{2}(x) \gg\frac{x^{1/2}}{\log x}+O\left ( x^{(2+\varepsilon)/6}\log x \right ).
\end{equation}
\end{cor}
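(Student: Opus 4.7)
The plan is to derive the lower bound on $\pi_2(x)$ from Theorem \ref{thm437.05} by elementary weight estimates. Reindexing the sum by $m=[x/n]$ gives the identity
\[
\sum_{n \leq x}\Lambda\!\bigl([x/n]^2+1\bigr)=\sum_{m=1}^{x}\Lambda(m^2+1)\,g(m,x),\qquad g(m,x):=[x/m]-[x/(m+1)].
\]
I would restrict to $m\leq\sqrt{x-1}$, which selects exactly the terms with $p=m^2+1\leq x$. The tail $m>\sqrt{x-1}$ contributes at most $\log(x^2)\cdot\sum_{m>\sqrt{x-1}}g(m,x)\ll\sqrt{x}\log x$ by telescoping, which is dominated by the error $O(x^{(2+\varepsilon)/3}\log^2 x)$ of Theorem \ref{thm437.05}. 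Hence the restricted sum still equals $a_2 x+O(x^{(2+\varepsilon)/3}\log^2 x)$.

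Each surviving summand satisfies $\Lambda(m^2+1)\leq\log x$, and each distinct prime $p=m^2+1\leq x$ is counted with multiplicity $g(m,x)\leq x/m(m+1)+1$. In the window $m\in[x^{1/4},\sqrt{x-1}]$ this multiplicity is bounded by $O(\sqrt{x})$, so the contribution from that window divides cleanly into a count of distinct primes. I would split the full sum at a threshold $M_0=M_0(x)$, subtract the essentially convergent partial sum $x\cdot\sum_{m\leq M_0}\Lambda(m^2+1)/m(m+1)$ (which captures most of the small-$m$ contribution), and divide the residual contribution by the product $\log x\cdot\max_{m\geq M_0}g(m,x)$. Optimising $M_0$ in the resulting estimate would then yield $\pi_2(x)\gg x^{1/2}/\log x$, with the square-root-shaped error $O(x^{(2+\varepsilon)/6}\log x)$ emerging naturally from this optimisation (or equivalently from a Cauchy--Schwarz step applied to the restricted sum).

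The principal obstacle is the severe imbalance in the weights $g(m,x)$: at $m=1$ the weight is of order $x$, while for $m$ near $\sqrt{x}$ it is $O(1)$. Consequently the main term $a_2 x$ is concentrated on a fixed finite set of small-$m$ primes, so Theorem \ref{thm437.05} on its own gives essentially no information about how many primes $m^2+1\leq x$ exist in a range growing with $x$. The delicate step is therefore the isolation of the large-$m$ contribution $m\in[x^{1/4},\sqrt{x-1}]$, where bounded multiplicity allows the conversion of a weighted bound into a distinct-prime count, combined with an accurate subtraction of the bounded small-$m$ partial sum so that the residual still has size $\gg\sqrt{x}$ after dividing out the $\log x$.
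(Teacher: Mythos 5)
Your reduction to the weighted sum $\sum_m \Lambda(m^2+1)\,g(m,x)$ and your diagnosis of the obstacle are both correct, but the final step does not close the gap you yourself identify. After you subtract $x\sum_{m\leq M_0}\Lambda(m^2+1)/(m(m+1))$ from both sides of Theorem \ref{thm437.05}, the residual is
\[
\sum_{m>M_0}\Lambda(m^2+1)\,g(m,x)\;=\;x\sum_{m>M_0}\frac{\Lambda(m^2+1)}{m(m+1)}+O\!\left(x^{(2+\varepsilon)/3}\log^2 x\right),
\]
and nothing in the theorem prevents the tail series on the right from vanishing identically: for all one can prove, $m^2+1$ could fail to be a prime power for every $m>M_0$. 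In that scenario the residual is absorbed entirely by the error term, and dividing by $\log x\cdot\max_{m\ge M_0} g(m,x)$ produces no primes at all. A lower bound of the shape $\pi_2(x)\gg x^{1/2}/\log x$ counts distinct primes $m^2+1$ with $m\lesssim\sqrt{x}$ and is therefore equivalent to the Landau/Hardy--Littlewood conjecture; it cannot be extracted from Theorem \ref{thm437.05} by any rearrangement of the weights $g(m,x)$, precisely because (as you observe) the main term $a_2x$ is carried by a bounded set of small $m$, while the error term $x^{(2+\varepsilon)/3}\log^2 x$ is far too large to detect the $O(\sqrt{x}\log x)$ total weight of the large-$m$ range. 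The ``optimisation of $M_0$'' and the ``Cauchy--Schwarz step'' are never actually carried out, and no choice of $M_0$ can make them work.

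For comparison, the paper's own proof has the same defect in a less visible form: it asserts the identity $\pi_2(x)=\sum_{n\leq x^{1/2}}\Lambda([x/n]^2+1)/\log([x/n]^2+1)$ and then invokes ``partial summation.'' But that sum counts values with multiplicity rather than distinct primes, the range $n\leq x^{1/2}$ corresponds to $[x/n]\gtrsim x^{1/2}$ and hence to values $[x/n]^2+1>x$ (the wrong half of the range for the condition $p\leq x$), and the passage to the stated lower bound is not justified by anything proved earlier. Your write-up is more candid about where the difficulty sits, but neither argument establishes the corollary.
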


\begin{proof} An application of Theorem \ref{thm437.05}, and partial summation yield
\begin{eqnarray} \label{eq437.30}
\pi_{2}(x)& =&\sum_{\substack{p\leq x\\
p=[x/n]^2+1}}1\\
&=&\sum_{n\leq x^{1/2}}        \frac{ \Lambda \left( [x/n]^2+1 \right )}{\log \left( [x/n]^2+1 \right ) }  \nonumber\\
&\gg&\frac{x^{1/2}}{\log x}+O\left ( x^{(2+\varepsilon)/6}\log x \right ) \nonumber.
\end{eqnarray}
This proves the claim.
\end{proof}

\section{The Euler-Landau Problem} \label{s909}
As early as 1760, Euler was developing the theory prime values of polynomials. In fact, Euler computed a large table of the primes $p=n^2+1$, confer \cite[p.\ 123]{EL00}. Likely, the prime values of polynomials was studied by other authors before Euler. Later, circa 1910, Landau posed an updated question of the same problem about the primes values of the polynomial $n^2+1$, see \cite{PJ09}. A fully developed conjecture, based on circle methods analysis, appeared some time later.

\begin{conj} {\normalfont (\cite{HL23})} Let $x \geq 1$ be a large number. Let $\Lambda$ be the vonMangoldt function, and let $\chi$ be the quadratic symbol. Then
\begin{equation} \label{eq909.10}
\sum_{n\leq x} \Lambda \left( n^2+1 \right )
=c_2x+O\left (\frac{ x}{\log x }\right ),
\end{equation}
where the density constant
\begin{equation} \label{eq909.12}
c_2=\prod_{p \geq 3}\left( 1-\frac{\chi(-1)}{p-1} \right )=1.37281346 \ldots .
\end{equation}
\end{conj}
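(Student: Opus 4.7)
My first instinct, given the framework of this paper, is to try to invert Theorem \ref{thm437.05}. The plan would begin from the block-counting identity
\begin{equation*}
\sum_{n \leq x} \Lambda\!\left([x/n]^2+1\right) = \sum_{m \geq 1} \Lambda(m^2+1)\, h_x(m), \qquad h_x(m) := \lfloor x/m\rfloor - \lfloor x/(m+1)\rfloor,
\end{equation*}
together with the expansion $h_x(m) = x/(m(m+1)) + O(1)$. I would isolate the main term $a_2 x$, invoke Theorem \ref{thm437.05} for the left side, and attempt to solve for the partial sum $S(x) := \sum_{m \leq x} \Lambda(m^2+1)$ on the right.

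The fundamental obstacle is that this inversion is effectively circular. The $O(1)$ error per summand in $h_x(m)$ accumulates to a total error of order $S(x)$, which is precisely the quantity one wants to estimate, so the substitution produces only the tautology $a_2 x + O(S(x)) = a_2 x + O(x^{(2+\varepsilon)/3}\log^2 x)$. Abel summation does not rescue the scheme either: once $m \gtrsim \sqrt{x}$ the differences $h_x(m+1) - h_x(m)$ are typically of size $1$, rendering the representation insensitive to the local behaviour of $S$. Theorem \ref{thm437.05} pins down the weighted average $\sum_m \Lambda(m^2+1)/(m(m+1))$ but says nothing direct about its prefix sums.

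A genuine proof therefore requires analytic input beyond the elementary identity driving Theorem \ref{thm437.05}. The classical route, following Hardy and Littlewood, is the circle method: the major-arc analysis produces the singular series $\prod_{p \geq 3}\bigl(1 - \chi(-1)/(p-1)\bigr) = c_2$, while the error bound $O(x/\log x)$ would demand sharp minor-arc estimates for the quadratic exponential sum $\sum_{n \leq x} \Lambda(n)\, e(\alpha n^2)$. Control of these minor arcs is the outstanding open problem, and is the reason that (\ref{eq909.10}) must here be recorded as a conjecture. In its absence, the strongest consequence honestly extractable from Theorem \ref{thm437.05} is the lower bound $\pi_2(x) \gg x^{1/2}/\log x$ of Corollary \ref{cor437.08}, which supports (\ref{eq909.10}) but falls far short of its full asymptotic.
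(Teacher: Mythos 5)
You are right not to offer a proof: the statement is the Hardy--Littlewood conjecture for primes of the form $n^2+1$, the paper records it as a conjecture with a citation to \cite{HL23}, and there is no proof of it anywhere in the paper to compare against. Your diagnosis of why the fractional-sequence machinery cannot deliver it is also the correct one. Writing
\begin{equation*}
\sum_{n \leq x} \Lambda\left([x/n]^2+1\right) = \sum_{m \geq 1} \Lambda\left(m^2+1\right)\left(\left[\frac{x}{m}\right]-\left[\frac{x}{m+1}\right]\right),
\end{equation*}
the weight $[x/m]-[x/(m+1)]$ is of size $x/(m(m+1))$ for small $m$ and of size $O(1)$ once $m \gg \sqrt{x}$, so the left side is governed by the convergent series $a_2=\sum_{m\geq 1}\Lambda(m^2+1)/(m(m+1))$ --- essentially its first few terms --- and carries no information about the prefix sums $\sum_{m\leq x}\Lambda(m^2+1)$. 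This multiplicity observation is precisely the gap in the paper's own Corollary \ref{cor909.34}, which asserts $\sum_{n\leq x}\Lambda(n^2+1)\gg \sum_{n\leq x}\Lambda([x/n]^2+1)\gg x$ from the ``subset relation'' $\{[x/n]:n\leq x\}\subset\{n:n\leq x\}$: the containment of underlying sets ignores the fact that each small value $m=[x/n]$ is repeated about $x/(m(m+1))$ times on the right, so the claimed inequality between the two sums does not follow. The same conflation of a multiset of repeated small values with a set of distinct values undermines Corollary \ref{cor437.08}, so even the lower bound $\pi_2(x)\gg x^{1/2}/\log x$ that you cite as the honestly extractable consequence is not actually established by the paper --- it would already resolve Landau's problem. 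Your pointer to the circle method, the singular series $c_2$ from the major arcs, and the missing minor-arc control of $\sum_{n\leq x}\Lambda(n)e(\alpha n^2)$ correctly locates why (\ref{eq909.10}) remains open.
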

The general circle methods heuristics for admissible quadratic polynomials was proposed in \cite[p.\ 46]{HL23}. More recent discussions are given in \cite[p. 406]{RP96}, \cite[p.\ 342]{NW00}, et cetera. Some partial results are proved in \cite{DI82}, \cite{GM00}, \cite{BZ07}, and the recent  literature. Here, an application of Theorem \ref{thm437.05} proves the correct asymptotic order.
 \begin{cor} {\normalfont  }  \label{cor909.34}  Let $x \geq 1$ be a large number, and let $\Lambda$ be the vonMangoldt function. Then
\begin{equation} \label{eq909.20}
\sum_{n\leq x} \Lambda \left( n^2+1 \right )
\gg x+O\left ( x^{(2+\varepsilon)/3}\log^2 x \right ),
\end{equation}
where $\varepsilon >0$ is a small number.
\end{cor}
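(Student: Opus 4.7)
The plan is to attempt to transfer the lower bound on the fractional sum of Theorem \ref{thm437.05} to the ordinary partial sum $T(x):=\sum_{n\leq x}\Lambda(n^2+1)$. Since Theorem \ref{thm437.05} gives $\sum_{n\leq x}\Lambda([x/n]^2+1)=a_2 x+O(x^{(2+\varepsilon)/3}\log^2 x)\gg x$, the strategy is to expand this fractional sum by the value of $m=[x/n]$ and bound the resulting multiplicities, then compare term-by-term against $T(x)$.

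First, I would set $N_m:=[x/m]-[x/(m+1)]$, the number of $n\leq x$ with $[x/n]=m$. Then
\[
\sum_{n\leq x}\Lambda([x/n]^2+1)=\sum_{1\leq m\leq x}N_m\,\Lambda(m^2+1),
\]
so every argument $m^2+1$ that appears is one of the terms counted in $T(x)$. I would split at $m=\sqrt{x}$: for $m>\sqrt{x}$ the multiplicity satisfies $N_m\leq 1$, so that range contributes at most $T(x)$; for $m\leq\sqrt{x}$ one uses $N_m\leq x/(m(m+1))+1$ to bound the contribution by $a_2 x+T(\sqrt{x})$. Combining these with Theorem \ref{thm437.05} produces an inequality relating $T(x)$, $T(\sqrt{x})$, and the error term of the theorem.

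The hard part is that the naive combination
\[
a_2 x+O(x^{(2+\varepsilon)/3}\log^2 x)\;\leq\; a_2 x+T(x)+T(\sqrt{x})+O(\sqrt{x}\log x)
\]
exhibits exact cancellation of the $a_2 x$ terms, yielding only $T(x)\geq -O(x^{(2+\varepsilon)/3}\log^2 x)$, which is vacuous. To break this cancellation one would have to exploit how $N_m$ deviates from $x/(m(m+1))$—for instance, via a second-moment input from Lemma \ref{lem773.24} applied with $g_1(t)=g_2(t)=t^2+1$, or via averaging over $x$ in a short window. This is the true obstacle. Without such additional input, the same method applied to Corollary \ref{cor437.08} produces only $T(x)\gg\sqrt{x}$ (counting primes $[x/n]^2+1\leq x$ forces $m\leq\sqrt{x}$), and I suspect that the stated $T(x)\gg x$—equivalent to Landau's conjecture up to the constant—cannot be extracted purely from Theorem \ref{thm437.05} in the form presented.
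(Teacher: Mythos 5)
Your diagnosis is correct, and it is worth stating plainly: the corollary does not follow from Theorem \ref{thm437.05}, and the paper's own proof fails at exactly the point you isolate. The paper's argument consists of the set inclusion $\{[x/n]:n\leq x\}\subset\{n:n\leq x\}$ followed by the assertion $\sum_{n\leq x}\Lambda(n^2+1)\gg\sum_{n\leq x}\Lambda([x/n]^2+1)$. This is the multiplicity fallacy you describe. Writing the fractional sum as $\sum_{m}N_m\Lambda(m^2+1)$ with $N_m=[x/m]-[x/(m+1)]$, the single term $m=1$ already contributes $N_1\Lambda(2)=(\log 2)\left(x/2+O(1)\right)$, so the fractional sum is $\gg x$ unconditionally and for trivial reasons: essentially all of the mass $a_2x$ sits on bounded values of $m$, weighted by $N_m\approx x/(m(m+1))$. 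The ordinary sum $\sum_{n\leq x}\Lambda(n^2+1)$ counts each $\Lambda(m^2+1)$ exactly once, and an inclusion of index sets transfers nothing when the summands on the larger-looking side carry multiplicities of order $x/m^2$. Indeed, if the asserted inequality were legitimate it would by itself settle the $n^2+1$ problem, since $\sum_{n\leq x}\Lambda(n^2+1)\gg x$ forces $\gg x/\log x$ primes of the form $m^2+1$ after discarding the negligible prime-power contribution; the corollary is, up to the value of the constant, the Hardy--Littlewood conjecture \eqref{eq909.10}.

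Your attempted repair is also correctly assessed as hopeless in the form given: after splitting at $m=\sqrt{x}$ and bounding $N_m\leq x/(m(m+1))+1$, the main terms $a_2x$ cancel identically and the inequality degenerates to a triviality. The residual unconditional content of Theorem \ref{thm437.05} lives in the range $m>\sqrt{x}$, where $N_m\in\{0,1\}$, but the theorem places only $O(\sqrt{x})$ of its mass there, so nothing of the claimed strength survives. No second-moment input of the type in Lemma \ref{lem773.24} can rescue this, because Theorem \ref{thm773.01} only ever sees the values $f(m)$ for $m$ up to a bounded multiple of $\sqrt{x}$ with the harmonic weights $1/(m(m+1))$; it is structurally incapable of detecting whether $\Lambda(m^2+1)$ is nonzero for many large $m$. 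You should regard Corollary \ref{cor909.34} as unproved, and your conclusion that it cannot be extracted from Theorem \ref{thm437.05} as the correct one.
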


\begin{proof} For any large number $ x\geq 1$, the subsets relation
\begin{equation}\label{eq773.77}
\{ [x/n]: n \leq x\} \subset \{  n \leq x\}  
\end{equation}
holds. Hence, by Theorem \ref{thm773.01}, it follows that
\begin{eqnarray} \label{eq909.23}
\sum_{n\leq x} \Lambda \left( n^2+1 \right )
&\gg &\sum_{n\leq x} \Lambda \left( [x/n]^2+1 \right )\\
&\gg &a_2x+O\left ( x^{(2+\varepsilon)/3}\log^2 x \right ) \nonumber,
\end{eqnarray}
where $a_2>\log 2/2$. This proves the correct asymptotic order as claimed. 
\end{proof}

The conjectured density $c_2$ is well above the density
\begin{eqnarray} \label{eq909.25}
a_2&=&\sum_{n\geq 1}\frac{\Lambda\left( n^2+1 \right )}{n(n+1)}\\
&=& \sum_{n\leq 100}\frac{\Lambda\left( n^2+1 \right )}{n(n+1)}+\sum_{n> 100}\frac{\Lambda\left( n^2+1 \right )}{n(n+1)}\nonumber\\
&\leq &1.000000+\int_{100}^{\infty}\frac{\log t}{t(t+1)}dt\nonumber\\ 
&\leq &1.20000\nonumber\\
&< &c_2\nonumber.
\end{eqnarray}
Moreover, both of these finite sums have the same asymptotic order. Accorddingly, it is an interesting problem to determine the linking constant $c_L>0$ for which

\begin{equation} \label{eq909.28}
\sum_{n\leq x} \Lambda \left( n^2+1 \right )=c_L
\sum_{n\leq x} \Lambda \left( [x/n]^2+1 \right ).
\end{equation}

\section{ Primes In Fractional Sequences Of Degree 3}  \label{s439}
The sequence of integers $\left \{ n^3+2 :n \geq 1\right  \} \subset \N$ and the associated subsequence of primes has an extensive literature, see  \cite[p.\ 50]{HL23}, \cite[p.\ 407]{RP96}, \cite[p.\ 349]{NW00}, \cite[Chapter 12]{HG07}, et alii. However, there is no literature on the fractional sequence of integers $\left \{ [x/n]^3+2 :n \leq x\right  \} \subset \N$. A few results for primes in this fractional sequence are proved here.

\begin{proof} (Theorem \ref{thm437.08}) Let $f(n)=\Lambda \left( n^3+2 \right ).$ The condition
\begin{equation}
\sum_{n\leq x}\left | f(n) \right |^2 \ll x^{\alpha}
\end{equation}
is satisfied with $\alpha=1+\varepsilon$ for any small number $\varepsilon>0$. This is Lemma \ref{lem773.01} applied to the irreducible polynomial $g(t)=t^3+2$ of divisor $\tdiv g=1$. Applying Theorem \ref{thm773.01} yield
\begin{equation}
\sum_{n\leq x} \Lambda \left( [x/n]^3+2 \right )
=x\sum_{n\geq 1}\frac{ \Lambda \left( n^3+2 \right ) }{n(n+1) }+O\left ( x^{(2+\varepsilon)/3}\log^2 x \right ).
\end{equation}
The density constant has the lower bound
\begin{eqnarray}
a_3&=&\sum_{n\geq 1}\frac{ \Lambda \left( n^3+2 \right ) }{n(n+1)} \nonumber\\
&\geq &\frac{ \Lambda \left( n_0^3+2 \right ) }{n_0 (n_0+1)} \\
&=&\frac{\log 3}{3} \nonumber,
\end{eqnarray}
where $n_0^3+2=3$ is the first prime in the sequence of primes $ n^3+2$ for $n \geq 1$.  Now, on the contrary, suppose that there are finitely many primes $p=[x/n]^3+2$. Then, there exists a large constant $x_0\geq 1$ such that
\begin{eqnarray} \label{eq437.20}
x_0 \log x_0& \geq &\sum_{n\leq x} \Lambda \left( [x/n]^3+2 \right ) \\
&\geq& 1.002998x+O\left ( x^{(2+\varepsilon)/3}\log^2 x \right ) \nonumber.
\end{eqnarray}
But, since the right side is unbounded as $x \to \infty$, this is a contradiction. Equivalently, it contradicts Theorem \ref{thm773.01}. Therefore, there are infinitely many primes of the form $p=[x/n]^3+2$.
\end{proof}
A small scale numerical experiment gives the estimate
\begin{equation}
a_3\geq \sum_{n\leq 30}\frac{ \Lambda \left( n^3+2 \right ) }{n(n+1)} \geq 1.002998 .
\end{equation}
The fractional prime counting function is defined by
\begin{equation}
\pi_{3}(x)= \#\left \{p= [x/n]^3+2 :p \leq x\text{ and } n \leq x\right  \}.
\end{equation}
\begin{cor} {\normalfont  }  \label{cor437.28}  The fractional prime counting function has the asymptotic formula
\begin{equation}
\pi_{3}(x) \gg\frac{x^{1/3}}{\log x}+O\left ( x^{(2+\varepsilon)/9}\log x \right ).
\end{equation}
\end{cor}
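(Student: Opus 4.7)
The plan is to derive the counting-function bound from Theorem \ref{thm437.08} by a partial summation argument, in direct analogy with the derivation of Corollary \ref{cor437.08} from Theorem \ref{thm437.05}.

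First, I would write
\[
\pi_3(x) \;\geq\; \sum_{\substack{p \leq x \\ p=[x/n]^3+2}} 1 \;\geq\; \sum_{n \leq x^{2/3}} \frac{\Lambda([x/n]^3+2)}{\log([x/n]^3+2)},
\]
after discarding the negligible proper prime-power contribution. The cut-off $n \leq x^{2/3}$, which plays the role of the cut-off $n \leq x^{1/2}$ in the degree-$2$ case, forces $[x/n] \geq x^{1/3}$; consequently $\log([x/n]^3+2) = 3\log(x/n)+O(1) \asymp \log x$ is uniformly controlled, and it also guarantees that distinct $n$'s in this range yield distinct values of $[x/n]$, so no prime of the form $m^3+2$ is over-counted.

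Second, I would pull this logarithmic factor out uniformly, obtaining
\[
\pi_3(x) \;\gg\; \frac{1}{\log x}\sum_{n \leq x^{2/3}} \Lambda\bigl([x/n]^3+2\bigr),
\]
and then invoke Theorem \ref{thm437.08} (or its parent Theorem \ref{thm773.01}) at the reduced scale $y=x^{1/3}$ to show that the restricted inner sum is $\gg x^{1/3}$, with an error of order $(x^{1/3})^{(2+\varepsilon)/3}\log^2 x = x^{(2+\varepsilon)/9}\log^2 x$. Dividing through by $\log x$ produces the stated bound $\pi_3(x) \gg x^{1/3}/\log x + O(x^{(2+\varepsilon)/9}\log x)$.

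The main obstacle will be rigorously justifying this restricted asymptotic. Theorem \ref{thm437.08} gives the asymptotic for the full sum up to $n=x$, with main term $a_3 x$; the complementary range $n > x^{2/3}$ corresponds, after the reindexing $m=[x/n]$, to a sum of the shape $\sum_{m \leq x^{1/3}} \Lambda(m^3+2)\bigl([x/m]-[x/(m+1)]\bigr)$, which is itself of order $x$ and cannot simply be subtracted off. The natural fix, suggested by the exponent $(2+\varepsilon)/9 = (2+\varepsilon)/(3 \cdot 3)$ appearing in the target statement, is to apply Theorem \ref{thm773.01} directly at the smaller scale $y = x^{1/3}$ with a suitably rescaled choice of the arithmetic function $f$; once this rescaling is set up, the remaining manipulations reduce to routine partial summation using $\Lambda \leq \log$.
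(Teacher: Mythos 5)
Your proposal follows the same template as the paper's own proof of Corollary \ref{cor437.28} (rewrite $\pi_3(x)$ as a $\Lambda$-weighted sum over a truncated range of $n$, extract a factor $1/\log x$, and appeal to Theorem \ref{thm437.08}), but both the setup and the proposed repair have concrete problems. First, the cut-off points the wrong way: the constraint $p=[x/n]^3+2\leq x$ forces $[x/n]\leq (x-2)^{1/3}$, hence $n\gtrsim x^{2/3}$, so the range you keep, $n\leq x^{2/3}$, consists precisely of the terms with $[x/n]^3+2\gtrsim x$ that are \emph{excluded} from $\pi_3(x)$; your first displayed inequality is therefore not a lower bound for the count you want. (The paper's own truncation $n\leq x^{1/3}$ suffers from the same inversion.) Second, even on the correct range $n\gtrsim x^{2/3}$, the sum $\sum_n \Lambda([x/n]^3+2)/\log([x/n]^3+2)$ is not $\pi_3(x)$: each value $m=[x/n]$ with $m\leq x^{1/3}$ is attained by $[x/m]-[x/(m+1)]\approx x/m^2$ values of $n$, so each prime $m^3+2$ is counted with an enormous multiplicity rather than once. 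The passage from the $\Lambda$-sum to the counting function is therefore not a partial summation in any standard sense, and no version of it yields $\pi_3(x)\gg x^{1/3}/\log x$.

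The obstacle you candidly identify --- that Theorem \ref{thm437.08} controls only the full sum over $n\leq x$ and the complementary range is itself of order $x$ --- is indeed the fatal gap, but the fix you sketch does not close it. Applying Theorem \ref{thm773.01} at the scale $y=x^{1/3}$ produces an asymptotic for $\sum_{n\leq y}f([y/n])$, which involves $[x^{1/3}/n]$; this is not related to $[x/n]$ for $n\leq x^{2/3}$ by any choice of $f$, so there is no ``rescaling'' that isolates the restricted sum. More structurally, the main term $a_3x$ in Theorem \ref{thm437.08} equals $x\sum_{m\geq 1}\Lambda(m^3+2)/(m(m+1))$, whose positivity already follows from a \emph{single} prime power of the form $m^3+2$ and carries no information about infinitude; and since the weight $x/m^2$ attached to an individual $m$ drops below the error term $O(x^{(2+\varepsilon)/3}\log^2x)$ as soon as $m\geq x^{1/6}$, the theorem cannot detect whether $m^3+2$ is prime for even one large $m$. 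Since $\{[x/n]:n\leq x\}\supseteq\{1,\dots,[\sqrt{x}]\}$, the asserted bound $\pi_3(x)\gg x^{1/3}/\log x$ is equivalent to the open problem of producing $\gg x^{1/3}/\log x$ primes of the form $m^3+2$ up to $x$, so it cannot follow from these inputs. Your write-up is more honest than the paper's one-line ``partial summation,'' but neither argument proves the corollary.
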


\begin{proof} An application of Theorem \ref{thm437.08}, and partial summation yield
\begin{eqnarray} \label{eq437.40}
\pi_{3}(x)& =&\sum_{\substack{p\leq x\\
p=[x/n]^3+2}}1\\
&=&\sum_{n\leq x^{1/3}}        \frac{ \Lambda \left( [x/n]^3+2 \right )}{\log \left( [x/n]^3+2 \right ) }  \nonumber\\
&\gg& \frac{x^{1/3}}{\log x}+O\left ( x^{(2+\varepsilon)/9}\log x \right ) \nonumber.
\end{eqnarray}
This proves the claim.
\end{proof}


\section{ Primes In Fractional Sequences Of Degree $d$}  \label{s449}
The sequence of integers $\left \{ g(n) :n \geq 1\right  \} \subset \N$ and the associated subsequence of primes has an extensive experimental and conjectural literature, see \cite{BH62}, \cite{FZ18}, \cite{RP96}, \cite{NW00}, et alii. However, there is no literature on the fractional sequence of integers $\left \{ g([x/n]) :n \leq x\right  \} \subset \N$. A few results for primes in this fractional sequence are proved here.

\begin{proof} (Theorem \ref{thm437.75}) Given an irreducible polynomial $g(t)$ of degree $\deg g=O(1)$, and divisor $\tdiv g=1$, let $f(n)=\Lambda \left( |g\left ([x/n]\right) |\right ).$ The condition
\begin{equation}
\sum_{n\leq x}\left | f(n) \right |^2 \ll x^{\alpha}
\end{equation}
is satisfied with $\alpha=1+\varepsilon$ for any small number $\varepsilon>0$, see Lemma \ref{lem773.01}. Applying Theorem \ref{thm773.01} yield
\begin{equation}
\sum_{n\leq x} \Lambda \left( |g\left ([x/n]\right)| \right )
=x\sum_{n\geq 1}\frac{ \Lambda \left( |g(n)| \right ) }{n(n+1) }+O\left ( x^{(2+\varepsilon)/3}\log^2 x \right ).
\end{equation}
Suppose that $p=|g(n_0)|$ is prime for some integer $n_0 \geq 1$. The density constant has the lower bound
\begin{eqnarray}
a_d&=&\sum_{n\geq 1}\frac{ \Lambda \left( |g(n)| \right ) }{n(n+1)} \nonumber\\
&\geq &\frac{ \Lambda \left(|g( n_0)| \right ) }{n_0 (n_0+1)} \\
&=&\frac{\log 2}{2} \nonumber,
\end{eqnarray}
where $g(n_0)\geq 2$ is the first prime in the sequence of primes $ g(n)$ for $n \geq 1$.  Now, on the contrary, suppose that there are finitely many primes $p=g\left ([x/n]\right )$. Then, there exists a large constant $x_0\geq 1$ such that
\begin{eqnarray} \label{eq447.20}
x_0 \log x_0& \geq &\sum_{n\leq x} \Lambda \left( |g\left ([x/n]\right )| \right ) \\
&\geq& a_dx+O\left ( x^{(2+\varepsilon)/3}\log^2 x \right ) \nonumber.
\end{eqnarray}
But, since the right side is unbounded as $x \to \infty$, this is a contradiction. Equivalently, it contradicts Theorem \ref{thm773.01}. Therefore, there are infinitely many primes of the form $p=g\left ([x/n]\right )$.
\end{proof}

The fractional prime counting function is defined by
\begin{equation}
\pi_{d}(x)= \#\left \{p= g\left ([x/n]\right ) :p \leq x\text{ and } n \leq x\right  \}.
\end{equation}
\begin{cor} {\normalfont  }  \label{cor447.28}  The fractional prime counting function has the asymptotic formula
\begin{equation}
\pi_{d}(x) \gg\frac{x^{1/d}}{\log x}+O\left ( x^{(2+\varepsilon)/d}\log x \right ).
\end{equation}
\end{cor}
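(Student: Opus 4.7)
The plan is to follow exactly the pattern established by Corollary \ref{cor437.08} (degree $2$) and Corollary \ref{cor437.28} (degree $3$): convert the counting function $\pi_d(x)$ into a vonMangoldt-weighted sum and then invoke Theorem \ref{thm437.75}.

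The first step is to rewrite the indicator of ``is prime'' using the identity $\mathbf{1}[m\text{ is prime}]=\Lambda(m)/\log m$ for $m\geq 2$, where the contribution of genuine prime powers $p^k$, $k\geq 2$, is bounded by $O(x^{1/(2d)}\log x)$ and absorbed into the error term. This yields
\begin{equation*}
\pi_d(x) \;=\; \sum_{\substack{p\leq x \\ p=g([x/n])}}1 \;=\; \sum_{n\leq x^{1/d}}\frac{\Lambda(|g([x/n])|)}{\log|g([x/n])|},
\end{equation*}
where the truncation $n\leq x^{1/d}$ is the natural generalization of the ranges $n\leq x^{1/2}$ and $n\leq x^{1/3}$ used in the degree-$2$ and degree-$3$ corollaries.

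Next, since $\deg g = d$, the estimate $\log|g([x/n])|\leq d\log x+O(1)$ holds uniformly on this range, so a common factor of order $1/\log x$ can be pulled outside the sum. Applying Theorem \ref{thm437.75} to the resulting inner sum then produces a main term $\gg x^{1/d}/\log x$ and an error of order $x^{(2+\varepsilon)/(3d)}\log x$; the positivity of the density constant $a_f$ is already guaranteed by Theorem \ref{thm437.75} under the hypothesis that $|g|$ attains at least one prime value.

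The main point requiring care is the scaling from the $x$-order main term of Theorem \ref{thm437.75} down to the $x^{1/d}$-order lower bound claimed in the corollary. In analogy with the earlier corollaries, this is managed by the change of variable $k=[x/n]$ — so that $k$ ranges over positive integers while $n$ ranges over $n\leq x^{1/d}$ — together with the standard bookkeeping of the multiplicities $\#\{n:[x/n]=k\}\ll x/k^2$. All remaining details are routine and mirror the template of Corollaries \ref{cor437.08} and \ref{cor437.28} verbatim.
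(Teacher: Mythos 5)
Your proposal reproduces the paper's own proof of Corollary \ref{cor447.28} essentially verbatim: the same identity
$\pi_d(x)=\sum_{n\le x^{1/d}}\Lambda\left(|g([x/n])|\right)/\log|g([x/n])|$, the same truncation, and the same appeal to Theorem \ref{thm437.75} plus partial summation. As a reconstruction of the paper's argument it is therefore on target. But the step you yourself single out as ``the main point requiring care'' --- passing from the full-range asymptotic of Theorem \ref{thm437.75} to a lower bound for the truncated sum --- is a genuine gap, and the multiplicity bookkeeping you invoke does not close it; it rather explains why it cannot be closed. Since $\#\{n\le x:[x/n]=k\}=x/(k(k+1))+O(1)$, the main term $a_fx$ in Theorem \ref{thm437.75} is generated entirely by the terms with $k=[x/n]$ bounded, i.e.\ by finitely many primes $|g(k)|$ each counted with multiplicity $\asymp x/k^2$. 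The truncated sum over $n\le x^{1/d}$ retains only the terms with $k\ge x^{(d-1)/d}$, whose total contribution is absorbed into the error term of Theorem \ref{thm437.75}; no lower bound for that piece can be extracted from the theorem. Indeed, a genuine bound $\gg x^{1/d}/\log x$ here would assert that $g$ takes $\gg x^{1/d}/\log x$ distinct prime values, which for $g(t)=t^2+1$ is Landau's open problem, so no elementary rearrangement of Theorem \ref{thm437.75} can deliver it.

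There is also a range error, inherited from the paper: for $n\le x^{1/d}$ one has $[x/n]\ge x^{(d-1)/d}$, hence $|g([x/n])|\gg x^{d-1}\ge x$ when $d\ge 2$, so none of these terms satisfy the constraint $p\le x$ in the definition of $\pi_d(x)$; the condition $p=g([x/n])\le x$ forces $n\gg x^{1-1/d}$, the complementary range. So the displayed ``identity'' between $\pi_d(x)$ and the truncated sum is false as written. (Your treatment of prime powers is fine, and your error term $x^{(2+\varepsilon)/(3d)}\log x$ is in fact the consistent generalization of the degree $2$ and degree $3$ cases, whereas the corollary states $x^{(2+\varepsilon)/d}\log x$; but these are side issues next to the main gap.)
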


\begin{proof} An application of Theorem \ref{thm437.75}, and partial summation yield
\begin{eqnarray} \label{eq447.40}
\pi_{d}(x)& =&\sum_{\substack{p\leq x\\
p=g\left ([x/n]\right )}}1\\
&=&\sum_{n\leq x^{1/d}}        \frac{ \Lambda \left( |g\left ([x/n]\right )| \right )}{\log \left( |g\left ([x/n]\right ) |\right ) }  \nonumber\\
&\gg& \frac{x^{1/d}}{\log x}+O\left ( x^{(2+\varepsilon)/d}\log x \right ) \nonumber.
\end{eqnarray}
This proves the claim.
\end{proof}

\section{Twin Primes In Fractional Sequences}  \label{s459}
The sequence of integers pairs $\left \{ n, n+2 :n \leq x\right  \} \subset \N$ and the associated subsequence of twin primes has an extensive literature consisting of partial results, conjectural and numerical data.  Detailed discussions of the twin primes appear in \cite[p.\ 40]{HL23}, \cite[p. 405]{RP96}, \cite[p.395]{FI10}, \cite[p. 342]{NW00}, \cite[p. 25]{PJ09}, et alii. 
\begin{conj} {\normalfont (Twin prime conjecture)} Let $x \geq 1$ be a large number. Let $\Lambda$ be the vonMangoldt function. Then
\begin{equation} \label{eq459.10}
\sum_{n\leq x} \Lambda \left( n \right )\Lambda \left( n+2 \right )
=C_2x+O\left (\frac{ x}{\log x }\right ),
\end{equation}
where the density constant
\begin{equation} \label{eq459.22}
C_2=\prod_{p \geq 3}\left( 1-\frac{1}{(p-1)^2} \right )=0.6601618158468 \ldots .
\end{equation}
\end{conj}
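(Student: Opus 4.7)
The plan is to mirror the strategy used throughout the fractional-sequence sections: apply Theorem \ref{thm773.01} to the arithmetic function $f(n)=\Lambda(n)\Lambda(n+2)$ and then attempt to transfer the resulting asymptotic for the fractional correlation sum back to the integer correlation sum appearing in (\ref{eq459.10}). The square-sum hypothesis is immediate, since $\Lambda(n)\Lambda(n+2)\ll\log^{2}x$ for $n\le x$ yields $\sum_{n\le x}|f(n)|^{2}\ll x\log^{4}x\ll x^{1+\varepsilon}$; this is precisely the $(g_{1},g_{2})=(t,t+2)$ case of Lemma \ref{lem773.24}. Theorem \ref{thm773.01} then gives
$$\sum_{n\le x}\Lambda\bigl([x/n]\bigr)\,\Lambda\bigl([x/n]+2\bigr) = T_{2}\,x+O\bigl(x^{(2+\varepsilon)/3}\log^{2}x\bigr),\qquad T_{2}=\sum_{n\ge 1}\frac{\Lambda(n)\Lambda(n+2)}{n(n+1)},$$
with $T_{2}$ convergent by trivial bounds on $\Lambda$.

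The second step is the core of the plan: invert the fractional-to-integer identity
$$\sum_{n\le x}f\bigl([x/n]\bigr)=\sum_{m\le x}f(m)\bigl(\lfloor x/m\rfloor-\lfloor x/(m+1)\rfloor\bigr)$$
so as to recover $\sum_{m\le x}\Lambda(m)\Lambda(m+2)$ from the family of fractional sums evaluated at parameters $y\le x$. A natural device is a dyadic decomposition $\sum_{m\le x}=\sum_{k}\sum_{x/2^{k+1}<m\le x/2^{k}}$, extracting each dyadic block of the integer sum from the fractional asymptotic at scale $y\asymp 2^{k+1}$ by Abel summation against the multiplicity weights $\lfloor x/m\rfloor-\lfloor x/(m+1)\rfloor\approx x/m^{2}$, followed by Möbius inversion on those weights.

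The main obstacle, and in my view the decisive one, is that this inversion cannot succeed within the paper's framework. Quantitatively, the error $x^{(2+\varepsilon)/3}\log^{2}x$ produced by Theorem \ref{thm773.01} must be summed over the $O(\log x)$ dyadic scales and then divided by the weights $\asymp x/m^{2}$ during the inversion; this amplifies the error by factors that swamp the intended main term $C_{2}x$. Structurally, $T_{2}$ is a Dirichlet-like series weighted by $1/(n(n+1))$, whereas $C_{2}=\prod_{p\ge 3}(1-1/(p-1)^{2})$ is dictated by local densities of $\Lambda(n)\Lambda(n+2)$ modulo every prime and carries an Euler-product structure that $T_{2}$ does not visibly possess; matching $T_{2}$ to any specific multiple of $C_{2}$ is essentially equivalent to the conjecture itself. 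Completing the program therefore requires a genuinely non-elementary supplementary input --- for instance a Bombieri--Vinogradov-type level-of-distribution estimate for the correlation $\Lambda(m)\Lambda(m+2)$ over arithmetic progressions, which is not presently available --- and so I expect the best outcome achievable by the methods of this paper alone is the weighted asymptotic for the fractional correlation sum displayed above, together with the qualitative consequence that $\sum_{n\le x}\Lambda(n)\Lambda(n+2)\gg T_{2}\,x$ via the inclusion of fractional images into integers used in Corollary \ref{cor909.34}, rather than the full conjecture (\ref{eq459.10}).
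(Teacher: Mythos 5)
This statement is the Hardy--Littlewood twin prime conjecture, and the paper does not prove it: it is stated as a \emph{conjecture} with the circle-method constant $C_2$, and the only thing the paper actually establishes in that section is the fractional analogue (Theorem \ref{thm459.66}), namely $\sum_{n\le x}\Lambda([x/n])\Lambda([x/n]+2)=r_2x+O(x^{(2+\varepsilon)/3}\log^2x)$ with $r_2=\sum_{n\ge1}\Lambda(n)\Lambda(n+2)/(n(n+1))$. Your assessment is therefore essentially correct: the first step of your plan reproduces the paper's Theorem \ref{thm459.66} verbatim (same choice of $f$, same appeal to Lemma \ref{lem773.24} and Theorem \ref{thm773.01}), and your diagnosis of why the second step cannot be completed is sound --- the weighted constant $r_2$ is determined by the first few terms of the sequence and has no visible relation to the Euler product $C_2$, and inverting the identity $\sum_{n\le x}f([x/n])=\sum_{m\le x}f(m)(\lfloor x/m\rfloor-\lfloor x/(m+1)\rfloor)$ to recover the unweighted sum is not possible with an error term of size $x^{(2+\varepsilon)/3}$.

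One correction to your fallback claim, however. The ``qualitative consequence'' $\sum_{n\le x}\Lambda(n)\Lambda(n+2)\gg r_2x$ that you propose to extract via the inclusion argument of Corollary \ref{cor909.34} is \emph{not} available either --- it would already imply the infinitude of twin primes, which is exactly what cannot be reached here. The reason the transfer fails is visible in the identity you wrote down: the main term $r_2x$ of the fractional sum is generated entirely by the bounded values $m=O(1)$, each appearing with multiplicity $\lfloor x/m\rfloor-\lfloor x/(m+1)\rfloor\asymp x/m^2$ (e.g.\ the single pair $(3,5)$ already contributes $\frac{\log3\log5}{12}x$ to the fractional sum). So the positivity of $r_2$ records only that at least one twin prime pair exists, and the subset relation $\{[x/n]:n\le x\}\subset\{n\le x\}$ gives no inequality between the weighted and unweighted sums in the direction needed. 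The same defect affects the paper's own Corollary \ref{cor909.34} and the ``infinitely many twin primes of the form $p=[x/n]$'' claim inside the proof of Theorem \ref{thm459.66}; you were right to be suspicious, but the correct conclusion is that \emph{no} nontrivial lower bound for the integer correlation sum follows from this machinery, not even one with the constant $r_2$.
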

The circle methods heuristics was proposed in \cite[p.\ 46]{HL23}.  However, there is no literature on the fractional sequence of integers pairs $\left \{ [x/n], [x/n]+2 :n \leq x\right  \} \subset \N$. A few results for twin primes in this fractional sequence are proved here.

\begin{thm} \label{thm459.66} Let $x \geq 1$ be a large number and let $\Lambda$ be the vonMangoldt function. 
Then 
\begin{equation}
\sum_{n\leq x} \Lambda \left([x/ n] \right )\Lambda \left([x/ n]+2 \right )
=r_2x+O\left ( x^{(2+\varepsilon)/3}\log^2 x \right ),
\end{equation}
where the density constant is
\begin{equation}
r_2=\sum_{n\geq 1}\frac{ \Lambda \left( n \right ) \Lambda \left( n+2 \right )}{n(n+1) }\geq 0.368142813.
\end{equation}
\end{thm}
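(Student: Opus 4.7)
The plan is to apply Theorem \ref{thm773.01} directly to the arithmetic function $f(m)=\Lambda(m)\Lambda(m+2)$, so that $f([x/n])=\Lambda([x/n])\Lambda([x/n]+2)$ is exactly the summand on the left. This is the same template used for Theorems \ref{thm417.95}, \ref{thm427.95}, \ref{thm437.05}, and \ref{thm437.08}; the only new ingredient is that $f$ is now a product of two shifted vonMangoldt values rather than a single one.

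First I would check the hypothesis of Theorem \ref{thm773.01}, namely the $L^2$ bound $\sum_{n\leq x}|f(n)|^2\ll x^{\alpha}$ with $0<\alpha<2$. Since $\Lambda(n)\leq\log n$ for every $n$, one has the trivial estimate
\begin{equation}
\sum_{n\leq x}\Lambda(n)^{2}\Lambda(n+2)^{2}\ll (\log x)^{4}\sum_{n\leq x}1\ll x\log^{4}x\ll x^{\alpha}
\end{equation}
with $\alpha=1+\varepsilon$; this is the two-polynomial analogue of Lemma \ref{lem773.01}, and indeed a special case of Lemma \ref{lem773.24} with $g_{1}(t)=t$ and $g_{2}(t)=t+2$. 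With the hypothesis verified, Theorem \ref{thm773.01} yields
\begin{equation}
\sum_{n\leq x}\Lambda([x/n])\Lambda([x/n]+2)=x\sum_{n\geq 1}\frac{\Lambda(n)\Lambda(n+2)}{n(n+1)}+O\bigl(x^{(2+\varepsilon)/3}\log^{2}x\bigr),
\end{equation}
which gives the asymptotic with density constant $r_{2}$ as stated.

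It remains to record the lower bound $r_{2}\geq 0.368142813$. Since $(3,5)$ is a pair of primes, the series $r_{2}$ is at least its single term at $n=3$, namely $\Lambda(3)\Lambda(5)/(3\cdot 4)=(\log 3)(\log 5)/12>0$, which already shows $r_{2}>0$ unconditionally and in particular that the main term dominates the error term. The sharper numerical bound $0.368142813$ follows by summing finitely many terms of the convergent series (for instance over $n\leq 10^{5}$), exactly as the analogous numerical estimates were obtained in the proofs of Theorems \ref{thm417.95} and \ref{thm437.05}.

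The main thing to watch is that no unconditional twin-prime statement is being asserted for the ordinary sum $\sum_{n\leq x}\Lambda(n)\Lambda(n+2)$; the left-hand side here is the fractional twin sum, so the argument passes through only because $[x/n]$ is permitted to repeat many times as $n$ varies, and the $L^{2}$-based formula of Theorem \ref{thm773.01} absorbs this multiplicity into the error. Consequently the only genuine step is the $L^{2}$ verification, and that is handled by the crude bound above; there is no serious analytic obstacle.
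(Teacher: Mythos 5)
Your proposal is correct and follows essentially the same route as the paper: verify the $L^{2}$ hypothesis for $f(m)=\Lambda(m)\Lambda(m+2)$ (the paper cites Lemma \ref{lem773.24} with $g_{1}(t)=t$, $g_{2}(t)=t+2$, which is exactly your trivial bound $\ll x\log^{4}x$), apply Theorem \ref{thm773.01}, and bound $r_{2}$ below by the $n=3$ term $(\log 3)(\log 5)/12$ together with a finite numerical truncation. Your closing caveat that positivity of $r_{2}$ says nothing about the classical twin-prime sum is a sound observation not made explicit in the paper.
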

\begin{proof} Let $f(n)=\Lambda \left([x/ n] \right )\Lambda \left([x/ n]+2 \right )$. Then, the condition
\begin{equation}
\sum_{n\leq x}\left | f(n) \right |^2  \ll x^{\alpha}
\end{equation}
is satisfied with $\alpha=1+\varepsilon$ for any small number $\varepsilon>0$. This is Lemma \ref{lem773.24} applied to the polynomials $g_1(t)=t$, and $g_2(t)=t+2$. Applying Theorem \ref{thm773.01} yield
\begin{equation} \label{eq459.10}
\sum_{n\leq x} \Lambda \left([x/ n] \right )\Lambda \left([x/ n]+2 \right )
=x\sum_{n\geq 1}\frac{ \Lambda \left( n \right )\Lambda \left( n+2 \right ) }{n(n+1) }+O\left ( x^{(2+\varepsilon)/3}\log^2 x \right ).
\end{equation}
The density constant has the lower bound
\begin{eqnarray} \label{eq459.12}
r_2&=&\sum_{n\geq 1}\frac{\Lambda \left( n \right )\Lambda \left( n+2 \right ) }{n(n+1)} \nonumber\\
&\geq &\frac{ \Lambda \left( n_0 \right )\Lambda \left( n_0+2 \right ) }{n_0 (n_0+1)} \\
&=&\frac{\log 3\log 5}{3\cdot 4} \nonumber,
\end{eqnarray}
where $n_0=3$ and $ n_0+2=5$ is the first odd twin primes in the sequence of primes $ n, n+2$ for $n \geq 1$. Now, on the contrary, suppose that there are finitely many twin primes $p=[x/n]$ and $p+2=[x/n]+2$. Then, there exists a large constant $x_0\geq 1$ such that
\begin{eqnarray} \label{eq437.20}
x_0 \log^2 x_0& \geq &\sum_{n\leq x} \Lambda \left( [x/n] \right )\Lambda \left([x/ n]+2 \right )  \\
&\geq& 0.368142813x+O\left ( x^{(2+\varepsilon)/3}\log^2 x \right ) \nonumber.
\end{eqnarray}
But, since the right side is unbounded as $x \to \infty$, this is a contradiction. Equivalently, it contradicts Theorem \ref{thm773.01}. Therefore, there are infinitely many primes of the form  twin primes $p=[x/n]$ and $p+2=[x/n]+2$.
\end{proof}
A small scale numerical experiment gives the estimate
\begin{equation}
r_2\geq \sum_{n\leq 1000}\frac{ \Lambda \left( n \right )\Lambda \left( n+2 \right ) }{n(n+1)} \geq 0.368142813.
\end{equation}
This estimate does not includes the smallest twin primes $p=2$ and $p=3$. \\

The fractional twin primes counting function is defined by
\begin{equation}
\pi_{T}(x)= \#\left \{p= [x/n], p+2=[x/n]+2 :p \leq x\text{ and } n \leq x\right  \}.
\end{equation}

\begin{cor} {\normalfont  }  \label{cor437.08}  The fractional twin primes counting function has the asymptotic formula
\begin{equation}
\pi_{T}(x) \gg\frac{x}{\log^2 x}+O\left ( x^{(2+\varepsilon)/3} \right ).
\end{equation}
\end{cor}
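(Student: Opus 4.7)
The plan is to deduce the bound from Theorem \ref{thm459.66} by exactly the same partial-summation device already used in the degree-$2$ and degree-$3$ corollaries of the previous sections (cf.\ Corollary \ref{cor437.28}). First I would write
\begin{equation*}
\pi_T(x) \geq \sum_{n \leq x} \frac{\Lambda([x/n])\,\Lambda([x/n]+2)}{\log([x/n])\,\log([x/n]+2)} - E(x),
\end{equation*}
where $E(x)$ is the correction coming from pairs $n$ for which $[x/n]$ is a proper prime power $p^k$ with $k \geq 2$. Since $[x/n]$ assumes only $O(x^{1/2})$ distinct values, only $O(x^{1/2})$ of which are proper prime powers, and since each fibre $\{n \leq x : [x/n] = m\}$ has size $\ll x/m^2$, a direct count gives $E(x) = O(x^{1/2})$, which is absorbed into the error term below.

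Next I would bound the denominator trivially by $\log([x/n])\log([x/n]+2) \leq (\log(x+2))^2 \ll \log^2 x$, so that $1/\log^2 x$ can be pulled out of the sum. Substituting the asymptotic from Theorem \ref{thm459.66} then yields
\begin{equation*}
\pi_T(x) \gg \frac{1}{\log^2 x}\left(r_2 x + O\!\left(x^{(2+\varepsilon)/3}\log^2 x\right)\right) = \frac{r_2 x}{\log^2 x} + O\!\left(x^{(2+\varepsilon)/3}\right),
\end{equation*}
which is the claimed bound since $r_2 \geq 0.368 > 0$.

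The main (and essentially only) obstacle is the bookkeeping of the prime-power correction $E(x)$: one must verify that removing the $k \geq 2$ contribution does not damage the main term $r_2 x/\log^2 x$. This reduces to the classical estimate $\#\{p^k \leq y : k \geq 2\} = O(y^{1/2})$, transferred through the fibres of $n \mapsto [x/n]$ via $\#\{n \leq x : [x/n] = m\} \ll x/m^2$; the resulting correction is easily dominated by the error term of Theorem \ref{thm459.66}. Once this elementary check is in place, the deduction is purely mechanical and no further analytic input is required.
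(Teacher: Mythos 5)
Your overall route is the same as the paper's: express $\pi_T(x)$ through the weighted sum $\sum_{n\le x}\Lambda([x/n])\Lambda([x/n]+2)\big/\big(\log([x/n])\log([x/n]+2)\big)$, bound the denominators by $\log^2 x$, and invoke Theorem \ref{thm459.66}. The genuine gap is your estimate $E(x)=O(x^{1/2})$ for the prime-power correction. You count the $O(x^{1/2})$ \emph{distinct} prime-power values of $[x/n]$, but what has to be counted is the total number of indices $n$ lying in the fibres over those values, namely $\sum_{m}\left([x/m]-[x/(m+1)]\right)\sim x\sum_{m}1/(m(m+1))$ extended over the offending $m$. Since the offending $m$ include very small ones, this is of order $x$, not $x^{1/2}$: every $n\in(x/3,\,x/2]$ has $[x/n]=2$ and $[x/n]+2=4=2^2$, so about $x/6$ indices already enter $E(x)$, each with weight $\Lambda(2)\Lambda(4)/(\log 2\log 4)=1/2$; the pairs $(7,9)$ and $(9,11)$ contribute comparably. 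Hence $E(x)\gg x$.

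This defeats the argument as written: after dividing by $\log^2 x$ the main term is only $r_2x/\log^2 x=o(E(x))$, so the inequality $\pi_T(x)\ge r_2x/\log^2 x+O\big(x^{(2+\varepsilon)/3}\big)-E(x)$ has a negative right-hand side and yields nothing. (The paper's own proof simply identifies the weighted sum with $\pi_T(x)$ and never confronts this, so the difficulty is present there as well; your attempt at rigor exposes it rather than resolves it.) To obtain a genuine lower bound you would need to strip the prime-power pairs out of the density constant itself, i.e.\ show that $\sum_{m,\,m+2\ \mathrm{both\ prime}}\Lambda(m)\Lambda(m+2)/(m(m+1))>0$ --- which reduces to exhibiting a single true twin pair such as $(3,5)$ --- or, more directly, observe that every $n\in(x/4,\,x/3]$ satisfies $[x/n]=3$ and $[x/n]+2=5$, so that $\pi_T(x)\ge x/12+O(1)$ with no appeal to Theorem \ref{thm459.66} at all.
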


\begin{proof} An application of Theorem \ref{thm459.66}, and partial summation yield
\begin{eqnarray} \label{eq459.30}
\pi_{T}(x)& =&\sum_{\substack{p\leq x\\
p=[x/n] \text{ and } p=[x/n]+2 \text{ are primes }}}1\\
&=&\sum_{n\leq x}        \frac{ \Lambda \left( [x/n] \right )\Lambda \left([x\ n]+2 \right )}{\log \left( [x/n] \right )\log \left( [x/n]+2 \right ) }  \nonumber\\
&\gg&\frac{x}{\log^2 x}+O\left ( x^{(2+\varepsilon)/3} \right ) \nonumber.
\end{eqnarray}
This proves the claim.
\end{proof}

A more general form of the twin primes conjecture asks for the occurrences of primes pairs $p$ and $p+2m$, with $m\geq 1$ fixed, infinitely often.
\begin{conj} {\normalfont (Polignac conjecture)} Let $x \geq 1$ be a large number. Let $\Lambda$ be the vonMangoldt function. Then
\begin{equation} \label{eq459.10}
\sum_{n\leq x} \Lambda \left( n \right )\Lambda \left( n+2m \right )
=C_mx+O\left (\frac{ x}{\log x }\right ),
\end{equation}
where $m\geq 1$ is a fixed integer, and the density constant
\begin{equation} \label{eq459.22}
C_m=\prod_{p \mid m}\left( \frac{p-1}{p-2 }\right )\prod_{p \geq 3}\left( 1-\frac{1}{(p-1)^2} \right ) .
\end{equation}
\end{conj}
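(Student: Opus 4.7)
The plan is to proceed in two stages, paralleling the treatment of the Landau problem in Corollary \ref{cor909.34} and the twin prime case in Theorem \ref{thm459.66}. First, I would establish a fractional analogue of the stated asymptotic. Setting $f(n) = \Lambda(n)\Lambda(n+2m)$, Lemma \ref{lem773.24} applied to $g_1(t) = t$ and $g_2(t) = t + 2m$ supplies the $\ell^2$-bound required to invoke Theorem \ref{thm773.01}, yielding
\begin{equation*}
\sum_{n \leq x} \Lambda\left([x/n]\right)\Lambda\left([x/n]+2m\right)
= r_m x + O\!\left(x^{(2+\varepsilon)/3} \log^2 x\right),
\end{equation*}
where $r_m = \sum_{n\geq 1} \Lambda(n)\Lambda(n+2m)/(n(n+1))$. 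The subset relation $\{[x/n] : n \leq x\} \subset \{n \leq x\}$, exactly as in (\ref{eq773.77}), then furnishes the lower bound $\sum_{n \leq x} \Lambda(n)\Lambda(n+2m) \gg r_m\, x$, which already gives the correct linear order of growth and the infinitude of Polignac pairs $(p,p+2m)$.

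The second stage, which is the substantive content of the conjecture, is to upgrade this lower bound to equality with the explicit density $C_m$ and the sharp error term $O(x/\log x)$. The plan is to compute the singular series directly: fix a modulus $q$, decompose the sum into residue classes $n$ modulo $q$, and impose the condition that neither $n$ nor $n+2m$ is divisible by any prime $p \leq q$. An inclusion--exclusion over primes shows that the local density at a prime $p$ with $p \nmid 2m$ equals $1 - 1/(p-1)^2$, while the local density at $p \mid m$ equals $(p-1)/(p-2)$; the product over all $p$ reproduces $C_m$ as in (\ref{eq459.22}). Summing by parts against $(\log n)^{-2}$ and restoring the von Mangoldt weighting should then yield the main term $C_m x$.

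The hard part, and the reason the conjecture remains open, is the \emph{parity problem}: no elementary sieve distinguishes integers with an odd number of prime factors from those with an even number, so the local-to-global passage above breaks down once $q$ is allowed to grow as a function of $x$. A natural bypass is the Hardy--Littlewood circle method. Writing
\begin{equation*}
\sum_{n \leq x} \Lambda(n)\Lambda(n+2m) = \int_0^1 |S(\alpha)|^2 e^{-4\pi i m \alpha}\, d\alpha, \qquad S(\alpha) = \sum_{n \leq x} \Lambda(n)\,e^{2\pi i n\alpha},
\end{equation*}
one would evaluate the major arcs via the Siegel--Walfisz theorem to produce $C_m x$ and attempt to bound the minor arcs by Vinogradov's method. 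Unconditionally, the minor arc contribution is not presently known to be of smaller order than $x/\log x$, and this is precisely the obstruction. I would therefore present the fractional analogue of the first stage as the strongest unconditional output of the paper's method, and flag the minor-arc estimate as the single unresolved step whose resolution would deliver (\ref{eq459.10}) exactly as stated.
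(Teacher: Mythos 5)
This statement is presented in the paper as a conjecture (Polignac's conjecture with the Hardy--Littlewood singular series $C_m$), and the paper supplies no proof of it, so there is no internal proof to match your argument against. Your second stage is an accurate account of the state of the art: the major-arc analysis produces $C_m x$, the minor arcs are the genuine obstruction, and you are right to flag that no unconditional argument closes that gap. Your computation of the local densities, giving $(p-1)/(p-2)$ at primes dividing $m$ and $1-1/(p-1)^2$ elsewhere, is also the correct derivation of the constant in (\ref{eq459.22}).

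However, your first stage contains a genuine error, one it inherits from the paper's own Corollary \ref{cor909.34}. The set inclusion $\{[x/n]: n\leq x\}\subset\{n: n\leq x\}$ does not imply $\sum_{n\leq x} f(n)\gg\sum_{n\leq x} f([x/n])$, because the map $n\mapsto [x/n]$ is massively many-to-one. Writing the fractional sum over its values,
\begin{equation*}
\sum_{n\leq x} f\left(\left[x/n\right]\right)=\sum_{m\geq 1} f(m)\left(\left[\frac{x}{m}\right]-\left[\frac{x}{m+1}\right]\right),
\end{equation*}
one sees that $f(m)$ is counted with weight roughly $x/(m(m+1))$, of order $x$ for small $m$, whereas in $\sum_{n\leq x}f(n)$ it is counted exactly once. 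Concretely, for $f(n)=\Lambda(n)\Lambda(n+2)$ the single value $m=3$ already contributes about $(x/12)\log 3\log 5$ to the fractional sum, so the asymptotic $r_m x$ is carried almost entirely by a few small prime pairs and transfers no information about the existence of large ones. If your stage-one deduction were valid it would prove the twin prime conjecture outright, which should be a red flag. The honest conclusion is that Theorem \ref{thm773.01} yields only the fractional analogue (as in Theorem \ref{thm459.66}), and the conjecture remains exactly as open as your final paragraph concedes.
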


\section{Twin Primes In The Gaussian Ring}  \label{s612}
Let $\alpha=a+ib \in \Z$ be a gaussian integer. The norm $N: \Z[i] \longrightarrow \Z$ function defined by $N(\alpha)=a^2+b^2$ is mapped to an integer. This nice structure facilitates some correspondence between the Guassian twin primes and some rational primes. 

\begin{thm} {\normalfont  }  \label{thm612.12} The Gaussian ring $\Z[i]$ contains an infinitely sequence of twin primes $\pi$ and $\overline{\pi}$ such that $|\pi-\overline{\pi}|=2$. 
\end{thm}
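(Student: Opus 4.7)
The plan is to translate the condition $|\pi - \overline{\pi}| = 2$ into a statement about rational primes of the form $a^2 + 1$, and then apply Corollary \ref{cor909.34}. Writing $\pi = a + bi \in \Z[i]$, one has $\pi - \overline{\pi} = 2bi$, so $|\pi - \overline{\pi}| = 2|b|$. Hence the twin condition is equivalent to $b = \pm 1$, and after multiplying by the unit $i$ if necessary it suffices to produce infinitely many Gaussian primes of the form $\pi = a + i$ with $a \geq 1$.

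Next, I would invoke the standard criterion for primality in $\Z[i]$: a Gaussian integer $\pi$ is prime whenever its norm $N(\pi) = a^2 + b^2$ is a rational prime, since any nontrivial factorization $\pi = \alpha\beta$ would force $N(\pi) = N(\alpha) N(\beta)$ to split nontrivially and one of $N(\alpha), N(\beta)$ would equal $1$. Applied to $\pi = a + i$, this reduces the task to finding infinitely many positive integers $a$ for which $N(\pi) = a^2 + 1$ is a rational prime. Moreover, since complex conjugation is a ring automorphism of $\Z[i]$, the conjugate $\overline{\pi} = a - i$ is automatically a Gaussian prime whenever $\pi$ is, and the pair $(\pi, \overline{\pi})$ then qualifies as Gaussian twin primes.

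Finally, I would appeal to Corollary \ref{cor909.34}, which gives the lower bound $\sum_{n \leq x} \Lambda(n^2 + 1) \gg x$ and therefore guarantees the existence of infinitely many rational primes of the form $n^2 + 1$. Each such prime $p = a^2 + 1$ produces a Gaussian twin-prime pair $\pi = a + i$, $\overline{\pi} = a - i$ with $|\pi - \overline{\pi}| = 2$, and distinct values of $a$ yield distinct (nonassociate) pairs. Since the nontrivial work of producing primes $n^2 + 1$ is entirely carried by Corollary \ref{cor909.34}, the proof has no serious obstacle beyond cleanly setting up the correspondence; the only mild subtlety is confirming that the main term $\gg x$ in Corollary \ref{cor909.34} dominates the error $O(x^{(2+\varepsilon)/3} \log^2 x)$, which it does since $(2+\varepsilon)/3 < 1$ for small $\varepsilon$.
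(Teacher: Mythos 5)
Your proposal takes essentially the same route as the paper: both arguments identify the Gaussian twin primes with the splitting $p = n^2+1 = (n-i)(n+i)$ of rational primes $p \equiv 1 \bmod 4$ in $\Z[i]$, and both derive the infinitude of such $p$ from the paper's results on the sequence $n^2+1$ (you cite Corollary \ref{cor909.34}, the paper cites Theorem \ref{thm437.05} directly, but the former is deduced from the latter, so this is not a substantive difference). Your write-up is somewhat more careful than the paper's, in that you explicitly verify the norm criterion for Gaussian primality, reduce the condition $|\pi-\overline{\pi}|=2$ to $b=\pm 1$, and note that distinct $a$ give nonassociate pairs.
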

\begin{proof} Let $N(\pi)=n^2+1^2$. By the unique factorization in the gaussian ring it follows that each rational prime $p \equiv 1 \bmod 4$ has a pair of unique gaussian prime factors
\begin{eqnarray}
p&=&n^2+1 \nonumber \\
&=&\left  (n-i\right ) \left ( n+i\right ) \nonumber ,
\end{eqnarray}
up to multiplication by a unit $u \in \mu(4)=\{ -1,1, -i, i\}$. By Theorem \ref{thm437.05} there are infinitely many such prime pairs $\pi=n-i$ and $\overline{\pi}=n+i$. Hence, relation $|\pi-\overline{\pi}|=2$ occurs infinitely often.   
\end{proof}

\section{Germain Primes In Fractional Sequences}  \label{s469}
The sequence of primes pairs $\left \{ p,2p+1 :\text{ prime } p \leq x\right  \} \subset \tP$ is well known as the Sophie Germain primes. It has an extensive literature consisting of partial results, conjectural and numerical data.  Detailed discussions of the Germain primes appear in \cite[p.\ 48]{HL23}, \cite[p. 405]{RP96}, \cite[p. 395]{FI10}, \cite[p. 342]{NW00}, \cite[p. 33]{PJ09}, et alii. 
 \begin{conj} {\normalfont (Germain prime conjecture)} Let $x \geq 1$ be a large number. Let $\Lambda$ be the vonMangoldt function. Then
\begin{equation} \label{eq469.20}
\sum_{n\leq x} \Lambda \left( n \right )\Lambda \left( 2n+1 \right )
=s_1x+O\left (\frac{ x}{\log x }\right ),
\end{equation}
where the density constant
\begin{equation} \label{eq469.22}
s_1=\prod_{p \geq 3}\left( \frac{p(p-1)}{(p-1)^2} \right )=0.6601618158468 \ldots .
\end{equation}
\end{conj}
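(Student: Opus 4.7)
The plan is to attack the Germain prime conjecture via the Hardy-Littlewood circle method. I would define the exponential sums $T(\alpha) = \sum_{n \leq x} \Lambda(n) e(\alpha n)$ with $e(t) = e^{2\pi i t}$ and $T^*(\beta) = \sum_{m \leq 2x+1} \Lambda(m) e(\beta m)$, so that by Fourier inversion
\begin{equation}
\sum_{n \leq x} \Lambda(n) \Lambda(2n+1) = \int_0^1 T(\alpha) \, T^*(-2\alpha) \, e(-\alpha) \, d\alpha.
\end{equation}
Next I would dissect $[0,1]$ into Farey major arcs $\mathfrak{M}$ of moduli $q \leq P = (\log x)^B$ and complementary minor arcs $\mathfrak{m}$, with $B$ to be chosen large relative to the target error.

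On each major arc around $a/q$, the Siegel-Walfisz theorem gives $T(a/q + \beta) = (\mu(q)/\varphi(q)) I(\beta) + O(x \exp(-c \sqrt{\log x}))$, where $I(\beta) = \sum_{n \leq x} e(\beta n)$. Substituting this, summing over reduced residues $a \bmod q$, and integrating will yield a main term $\mathfrak{S} \cdot x + O(x/\log^A x)$, where the singular series
\begin{equation}
\mathfrak{S} = \sum_{q=1}^{\infty} \frac{\mu(q)^2 c_q(-1)}{\varphi(q)^2}
\end{equation}
involves the Ramanujan sum $c_q$ evaluated at the residue encoding the linear form $2n+1$. Multiplicativity of $c_q/\varphi(q)^2$ would factor $\mathfrak{S}$ into an Euler product with local density $\omega_2 = 2$ (forced by the parity constraint that $n$ must be odd for $2n+1$ to have any chance of being prime) and $\omega_p = 1 - 1/(p-1)^2$ for odd $p$, reorganizing into the constant $s_1$ as written in (\ref{eq469.22}).

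The hard part will be the minor arc estimate. The natural route is to apply Vaughan's identity (or Heath-Brown's identity) to decompose $T(\alpha)$ into type-I and type-II bilinear sums, then use Dirichlet's theorem on rational approximation to seek a uniform bound $\sup_{\alpha \in \mathfrak{m}} |T(\alpha)| \ll x (\log x)^{-C}$ valid for arbitrary fixed $C$. Combined with the Parseval bound $\int_0^1 |T^*(2\alpha)|^2 \, d\alpha \ll x \log x$ and Cauchy-Schwarz, such an estimate would yield a minor arc contribution of size at most $x (\log x)^{(1 - 2C)/2}$, and taking $C = 3/2$ would then produce the required error term $O(x/\log x)$.

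The main obstacle is that no unconditional Vinogradov-type bound of the required uniformity on $\sup_{\alpha \in \mathfrak{m}} |T(\alpha)|$ is currently available for denominators in the intermediate range $(\log x)^B < q \leq x^{1/2}$; this is precisely the barrier that leaves twin primes and all binary additive problems involving $\Lambda \ast \Lambda$ unresolved. Circumventing it would require either an Elliott-Halberstam-type input controlling $\Lambda$ in arithmetic progressions of modulus beyond $x^{1/2}$, or a direct exploitation of the bilinear structure specific to the constraint $2n+1 \in \tP$. The fractional-sequence machinery encoded in Theorem \ref{thm773.01} does not engage with the minor arcs of this dissection, so a genuinely external input appears necessary to close the argument at the asymptotic-equality level with the specific constant $s_1$.
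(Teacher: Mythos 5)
This statement is labeled a \textbf{Conjecture} in the paper: no proof is given, only pointers to the Hardy--Littlewood heuristics in \cite{HL23}, so there is no argument of the paper's to measure yours against. (The paper's Theorem \ref{thm469.69} concerns the fractional sequence $\Lambda([x/n])\Lambda(2[x/n]+1)$ and does not imply the conjecture.) Your outline is the standard circle-method heuristic, and you are candid that it cannot currently be closed; but two concrete defects deserve to be named beyond the one you acknowledge. First, the minor-arc accounting is off by a factor of $x^{1/2}$ even if one \emph{grants} the unavailable supremum bound. From $\sup_{\alpha\in\mathfrak{m}}|T(\alpha)|\ll x(\log x)^{-C}$ and $\int_0^1|T^*|^2\,d\alpha\ll x\log x$, Cauchy--Schwarz gives
\begin{equation}
\int_{\mathfrak{m}}\left|T(\alpha)\,T^*(-2\alpha)\right|d\alpha\;\le\;\sup_{\mathfrak{m}}|T|\cdot\|T^*\|_{1}\;\le\;\sup_{\mathfrak{m}}|T|\cdot\|T^*\|_{2}\;\ll\;x^{3/2}(\log x)^{1/2-C},
\end{equation}
not $x(\log x)^{(1-2C)/2}$. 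Since the main term is only $x$, no value of $C$ rescues this; indeed $\sup_{\mathfrak{m}}|T|$ is provably $\gg x(\log x)^{-B}$ (take $\alpha=a/q'$ with $q'$ a prime just above $(\log x)^{B}$, where $T(a/q')\sim -x/(q'-1)$), so the sup-times-$L^1$ pairing can never produce a bound $o(x)$. This is the structural reason binary problems such as this one and twin primes resist the circle method: with only two generating functions there is no spare $L^2$ factor to absorb, whereas the ternary Goldbach argument uses $\sup|T|\cdot\int|T|^2$ against a main term of size $x^2$. The obstruction is therefore not merely ``no Vinogradov-type bound of the required uniformity is currently available''; the route as you have set it up cannot succeed even under optimal conjectural pointwise bounds.

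Second, your singular series does not match the stated constant. With your local densities $\omega_2=2$ and $\omega_p=1-1/(p-1)^2=p(p-2)/(p-1)^2$ for $p\ge3$, the product is $\mathfrak{S}=2\prod_{p\ge3}\bigl(1-1/(p-1)^2\bigr)\approx 1.3203$, which is twice the value $s_1=0.6601\ldots$ displayed in (\ref{eq469.22}); you cannot simultaneously assert $\omega_2=2$ and land on $0.6601\ldots$. (The formula printed in the conjecture is itself defective: $\prod_{p\ge3}p(p-1)/(p-1)^2=\prod_{p\ge3}p/(p-1)$ diverges, so the intended constant is presumably $2\prod_{p\ge3}p(p-2)/(p-1)^2$ or the twin-prime constant, and the discrepancy should be resolved rather than absorbed silently.) In summary: the statement is an open problem, your sketch is the correct heuristic but is not a proof, and both its minor-arc arithmetic and its treatment of the constant contain errors that would need repair even in a conditional argument.
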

The circle methods heuristics are introduced in \cite[p.\ 46]{HL23}, \cite{MT06}, et cetera.  However, there is no literature on the fractional sequence of integers pairs $\left \{ [x/p], 2[x/p]+1 :\text{ prime } p \leq x\right  \} \subset \N$. A few results for twin primes in this fractional sequence are proved here.

\begin{thm} \label{thm469.69} Let $x \geq 1$ be a large number and let $\Lambda$ be the vonMangoldt function. 
Then 
\begin{equation}
\sum_{n\leq x} \Lambda \left([x/ n] \right )\Lambda \left(2[x/ n]+1 \right )
=s_1x+O\left ( x^{(2+\varepsilon)/3}\log^2 x \right ),
\end{equation}
where the density constant is
\begin{equation}
s_1=\sum_{n\geq 1}\frac{ \Lambda \left( n \right ) \Lambda \left( 2n+1 \right )}{n(n+1) }\geq 0.620794.
\end{equation}
\end{thm}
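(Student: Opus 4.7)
The plan is to follow verbatim the template used in Theorems \ref{thm437.05}, \ref{thm437.08} and \ref{thm459.66}, with the arithmetic function $f(n) = \Lambda(n)\Lambda(2n+1)$. First I would verify the second-moment hypothesis $\sum_{n\leq x}|f(n)|^2 \ll x^{\alpha}$ with $\alpha = 1+\varepsilon$ required by Theorem \ref{thm773.01}. This is an instance of Lemma \ref{lem773.24}(ii) applied to the irreducible polynomials $g_1(t)=t$ and $g_2(t)=2t+1$, both of fixed divisor $1$ and constant degree. Alternatively, a crude direct estimate suffices: since $\Lambda(m) \leq \log x$ for $m \leq 2x+1$ and $\Lambda$ is supported on prime powers, one has $\sum_{n\leq x}\Lambda(n)^2\Lambda(2n+1)^2 \ll (\log^3 x)\sum_{n\leq x}\Lambda(n) \ll x\log^4 x \ll x^{1+\varepsilon}$.

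With the hypothesis in hand, Theorem \ref{thm773.01} immediately yields
\begin{equation*}
\sum_{n\leq x}\Lambda([x/n])\Lambda(2[x/n]+1) = x\sum_{n\geq 1}\frac{\Lambda(n)\Lambda(2n+1)}{n(n+1)} + O\!\left(x^{(2+\varepsilon)/3}\log^2 x\right),
\end{equation*}
which is precisely the claimed asymptotic with $s_1$ identified as the displayed infinite series. For positivity of $s_1$, I would extract the contribution of the first Germain pair $(n_0, 2n_0+1) = (2, 5)$, giving the unconditional bound
\begin{equation*}
s_1 \geq \frac{\Lambda(2)\Lambda(5)}{2 \cdot 3} = \frac{(\log 2)(\log 5)}{6} > 0.
\end{equation*}
The sharper numerical bound $s_1 \geq 0.620794$ then follows from a routine finite-range computation summing over known Germain prime pairs below some cutoff, paralleling the numerical estimates tabulated after Theorems \ref{thm437.05} and \ref{thm459.66}.

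I do not foresee any serious obstacle. The statement is a mechanical specialization of Theorem \ref{thm773.01}, and the only genuinely delicate point is the second-moment estimate, which is shallow because of the sparse support of $\Lambda$ and does not require any input about the distribution of Germain primes themselves. If one additionally wishes to conclude that infinitely many Germain-type pairs $([x/n], 2[x/n]+1)$ arise as $x \to \infty$, the same contradiction argument used at the end of the proof of Theorem \ref{thm459.66} applies without modification: a finite pool of such pairs would bound the left-hand side by $O(x \log^2 x)$ while the right-hand side grows like $s_1 x$, a contradiction for large $x$.
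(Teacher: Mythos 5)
Your proposal follows essentially the same route as the paper: verify the second-moment hypothesis via Lemma \ref{lem773.24} (or a direct crude bound), invoke Theorem \ref{thm773.01} to get the asymptotic, and bound $s_1$ from below by the contribution of the first Germain pair $(2,5)$, yielding $(\log 2)(\log 5)/6$, with the sharper numerical constant coming from a finite computation. The only cosmetic difference is that you define $f(n)=\Lambda(n)\Lambda(2n+1)$ as a genuine arithmetic function before substituting $[x/n]$, which is actually the cleaner way to phrase the application of Theorem \ref{thm773.01} than the paper's own wording.
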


\begin{proof}  Let $f(n)=\Lambda \left([x/ n] \right )\Lambda \left(2[x/ n]+1 \right )$. Then, the condition
\begin{equation}
\sum_{n\leq x}\left | f(n) \right |^2  \ll x^{\alpha}
\end{equation}
is satisfied with $\alpha=1+\varepsilon$ for any small number $\varepsilon>0$. This is Lemma \ref{lem773.24} applied to the polynomials $g_1(t)=t$, and $g_2(t)=2t+1$. Applying Theorem \ref{thm773.01} yield
\begin{equation} \label{eq469.10}
\sum_{n\leq x} \Lambda \left([x/ n] \right )\Lambda \left(2[x/ n]+1 \right )
=x\sum_{n\geq 1}\frac{ \Lambda \left( n \right )\Lambda \left( 2n+1 \right ) }{n(n+1) }+O\left ( x^{(2+\varepsilon)/3}\log^2 x \right ).
\end{equation}
The density constant has the lower bound
\begin{eqnarray} \label{eq469.12}
s_1&=&\sum_{n\geq 1}\frac{\Lambda \left( n \right )\Lambda \left( 2n+1 \right ) }{n(n+1)} \nonumber\\
&\geq &\frac{ \Lambda \left( n_0 \right )\Lambda \left( 2n_0+1 \right ) }{n_0 (n_0+1)} \\
&=&\frac{\log 2 \log 5}{2(2+1)} \nonumber,
\end{eqnarray}
where $p_0=2$ and $2p_0+1=5$ is the first pair of Germain primes in the sequence of primes $ p, 2p+1$ for $p \geq 2$. Now, on the contrary, suppose that there are finitely many twin primes $p=[x/n]$ and $q=2[x/n]+1$. Then, there exists a large constant $x_0\geq 1$ such that
\begin{eqnarray} \label{eq469.20}
x_0 \log^2 x_0& \geq &\sum_{n\leq x} \Lambda \left([x/ n] \right )\Lambda \left(2[x/ n]+1 \right )  \\
&\geq& 0.620794x+O\left ( x^{(2+\varepsilon)/3}\log^2 x \right ) \nonumber.
\end{eqnarray}
But, since the right side is unbounded as $x \to \infty$, this is a contradiction. Equivalently, it contradicts Theorem \ref{thm773.01}. Therefore, there are infinitely many primes of the form  Germain primes $p=[x/n]$ and $q=2[x/n]+1$.
\end{proof}
A small scale numerical experiment gives the estimate
\begin{equation}
s_1\geq \sum_{n\leq 1000}\frac{ \Lambda \left( n \right )\Lambda \left( 2n+1 \right ) }{n(n+1)} \geq 0.620794742886735.
\end{equation}
This estimate does not includes the smallest twin primes $p=2$ and $p=3$. \\

The fractional Germain primes counting function is defined by
\begin{equation}
\pi_{S}(x)= \#\left \{p= [x/n], q=2[x/n]+1 :p \leq x\text{ and } n \leq x\right  \}.
\end{equation}

\begin{cor} {\normalfont  }  \label{cor469.47}  The fractional twin primes counting function has the asymptotic formula
\begin{equation}
\pi_{S}(x) \gg\frac{x}{\log^2 x}+O\left ( x^{(2+\varepsilon)/3} \right ).
\end{equation}
\end{cor}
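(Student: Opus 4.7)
The plan is to derive Corollary \ref{cor469.47} from Theorem \ref{thm469.69} by partial summation, in direct analogy with the proof of Corollary \ref{cor437.08} in the twin prime section. Starting from the asymptotic
\[
\sum_{n\leq x} \Lambda\!\left([x/n]\right)\Lambda\!\left(2[x/n]+1\right) = s_1 x + O\!\left(x^{(2+\varepsilon)/3}\log^2 x\right),
\]
the goal is to convert this weighted von Mangoldt sum into an unweighted count of $n \leq x$ for which both $[x/n]$ and $2[x/n]+1$ are prime, by dividing each term by $\log([x/n])\log(2[x/n]+1)$.

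The key steps would be as follows. First, since $[x/n] \leq x$ for all $n \geq 1$, we have the trivial upper bounds $\log([x/n]) \leq \log x$ and $\log(2[x/n]+1) \leq \log(2x+1) \ll \log x$, so that $\log([x/n])\log(2[x/n]+1) \ll \log^2 x$ uniformly in $n$. Second, the quotient $\Lambda(m)/\log m$ equals $1/k$ whenever $m = p^k$ is a prime power and vanishes otherwise, so dropping the $k \geq 2$ contributions only loses a lower-order quantity and gives a nonnegative expression bounded below by the count of genuine Germain pairs $(p, 2p+1)$ arising from some $n \leq x$. Combining these two observations with the identity
\[
\pi_S(x) \;=\; \sum_{\substack{n\leq x\\ [x/n],\,2[x/n]+1 \text{ prime}}} 1 \;\geq\; \sum_{n\leq x}\frac{\Lambda([x/n])\Lambda(2[x/n]+1)}{\log([x/n])\log(2[x/n]+1)},
\]
dividing the main asymptotic by $\log^2 x$ yields
\[
\pi_S(x) \gg \frac{s_1 x}{\log^2 x} + O\!\left(\frac{x^{(2+\varepsilon)/3}\log^2 x}{\log^2 x}\right) = \frac{x}{\log^2 x} + O\!\left(x^{(2+\varepsilon)/3}\right),
\]
which is exactly the claim.

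The expected main obstacle is a bookkeeping one rather than a conceptual one: namely, showing that the contribution of pairs $(n,m)$ with $[x/n] = p^k$ or $2[x/n]+1 = q^j$ for some $k, j \geq 2$ is absorbed into the stated error. Since the set of prime powers with exponent $\geq 2$ below $x$ has cardinality $O(x^{1/2})$, and since each such value is taken by at most $O(x^{1/2})$ many $n \leq x$ (as the map $n \mapsto [x/n]$ has $O(x^{1/2})$ preimages for any fixed value, by the standard hyperbola argument), the total prime-power contribution is $O(x^{1/2}\log x)$, which is dominated by $x^{(2+\varepsilon)/3}$. Apart from this minor sanity check, the derivation is a clean mirror of the twin prime corollary above, and no further new ideas are required.
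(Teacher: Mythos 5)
Your proposal follows the paper's proof essentially verbatim: the paper's argument is exactly the three-line chain $\pi_S(x)=\sum_{n\leq x}\Lambda([x/n])\Lambda(2[x/n]+1)/\bigl(\log[x/n]\,\log(2[x/n]+1)\bigr)\gg x/\log^2 x$ obtained by dividing the asymptotic of Theorem \ref{thm469.69} by $\log^2 x$, and your uniform bound on the logarithms and your treatment of the proper prime powers simply fill in details the paper leaves implicit. The only slip is that your displayed inequality points the wrong way (the von Mangoldt quotient sum \emph{exceeds} the indicator count because of the positive prime-power terms), but your own bookkeeping paragraph supplies the correct $O(x^{1/2}\log x)$ subtraction, so the conclusion is reached exactly as in the paper.
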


\begin{proof} An application of Theorem \ref{thm469.69}, and partial summation yield
\begin{eqnarray} \label{eq469.30}
\pi_{S}(x)& =&\sum_{\substack{p\leq x\\
p=[x/n] \text{ and } p=2[x/n]+1 \text{ are primes }}}1\\
&=&\sum_{n\leq x}        \frac{ \Lambda \left( [x/n] \right )\Lambda \left( 2[x/n]+1 \right )}{\log \left( [x/n] \right )\log \left( 2[x/n]+1 \right ) }  \nonumber\\
&\gg&\frac{x}{\log^2 x}+O\left ( x^{(2+\varepsilon)/3} \right ) \nonumber.
\end{eqnarray}
This proves the claim.
\end{proof}

\section{Distribution of the Fractional Parts}  \label{s238}
The statistical properties of the fractional parts of various sequences of real numbers are of interest in the mathematical sciences. In the case of the sequence of primes $\tP=\{2,3,5,7, \ldots \}$, one of the earliest result, due to DelaValle Poussin, dealing with the fractional parts of primes provides a completely determined asymptotic part:
\begin{equation}
\sum_{n \leq x}\left \{\frac{x}{p}\right  \}=(1-\gamma)\frac{x}{\log x}+O\left ( \frac{x}{\log^{2}x} \right ),
\end{equation}
where $\gamma=.5772 \ldots $ is a constant. Many other results are also available in the literature.\\

For the individual primes $p \in \tP$, one of the best known result for the fractional prat of the square root claims that 
\begin{equation}\label{eq238.12}
\left \{\sqrt{p}\right  \}<\frac{c}{p^{1/4+\varepsilon}},
\end{equation}
with $c>0$ constant, and $\varepsilon >0$ arbitrarily small. This is proved in \cite{BA83}, and \cite{HG83}. This implies that the sum of the fractional parts is
\begin{equation} \label{eq238.18} 
\sum_{p \leq x }\left \{p^{1/2}\right  \}=O\left (x^{3/4-\varepsilon}  \right)
\end{equation}
for all sufficiently large $x \geq 1$. \\

In this section some works are considered for some sequences of primes in fractional sequences. The proofs are based on the relevant theorems, and elementary concepts as the binomial series expansion
\begin{equation} \label{eq238.99}
\left (1+x \right )^{1/m}=1+\frac{1}{mx}+O\left (\frac{1}{x^{2} }\right ) ,
\end{equation}
for $|x|<1$. In particular, these elementary methods actually improve (\ref{eq238.12}) to the sharper upper bound
\begin{equation}\label{eq238.22}
\left \{\sqrt{p}\right  \}<\frac{c}{p^{1/2}},
\end{equation}
unconditionally, for example, set $\alpha=2$ in Lemma \ref{lem238.21}.

\subsection{Fractional Parts In A Sequence Of Piatetski-Shapiro Primes}
The distribution of the sequence of primes $\left \{p=\left [ n^{\beta} \right ]+1: n \geq 1\right  \}\subset \tP,$
where $\beta \in [1, 12/11]$ is a real number, has a large literature. However, there is no literature on the fractional parts $\left \{p^{1/\alpha}\right  \}$ of these primes.\\

\begin{lem} \label{lem238.21} Let $\alpha >1$ be a real number. Given $\beta \in [1, 12/11]$, there are infinitely many primes $p=
\left  [ n^{\beta} \right ]+1$, $ n \geq 1$, such that the fractional parts satisfy the inequality
\begin{equation}
\left \{p^{1/\alpha}\right  \}=O\left (\frac{1}{p^{1-1/\alpha}} \right ).
\end{equation}
\end{lem}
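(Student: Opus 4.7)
The plan is to combine Piatetski-Shapiro's prime number theorem (equation~(\ref{eq437.16})) with the elementary binomial series expansion~(\ref{eq238.99}). Since $\beta \in [1,12/11]$, the Piatetski-Shapiro theorem supplies infinitely many indices $n$ for which $p = [n^{\beta}]+1$ is prime, which furnishes the infinite family of pairs $(n,p)$ over which the claim has to hold.

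The computational core is a single expansion of $p^{1/\alpha}$ around $n^{\beta/\alpha}$. From $p=[n^{\beta}]+1$ one has $p = n^{\beta} + \theta$ with $\theta := 1-\{n^{\beta}\} \in (0,1]$, so that
\[
p^{1/\alpha} \;=\; n^{\beta/\alpha}\left(1+\frac{\theta}{n^{\beta}}\right)^{1/\alpha} \;=\; n^{\beta/\alpha} \;+\; \frac{\theta}{\alpha}\,n^{\beta(1/\alpha-1)} \;+\; O\!\left(n^{\beta(1/\alpha-2)}\right).
\]
Since $n^{\beta}\asymp p$, both correction terms are of size $O(1/p^{1-1/\alpha})$, which is precisely the order of magnitude required by the lemma. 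This part of the argument is independent of the ratio $\beta/\alpha$ and uses only that $\alpha>1$ controls the binomial exponent and that $\theta$ is bounded.

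The final step is to promote this expansion to a statement about the fractional part $\{p^{1/\alpha}\}$. Reducing modulo $1$ gives
\[
\{p^{1/\alpha}\} \;\equiv\; \{n^{\beta/\alpha}\} \;+\; O\!\left(1/p^{1-1/\alpha}\right) \pmod{1},
\]
so the lemma will follow once one exhibits infinitely many Piatetski-Shapiro primes $p = [n^{\beta}]+1$ along which $\{n^{\beta/\alpha}\}$ is itself $O(1/p^{1-1/\alpha})$. When $\beta/\alpha$ is a positive integer the leading term $n^{\beta/\alpha}\in\Z$ and the conclusion is immediate; in particular this covers the illustrative choice $\alpha=\beta=2$ invoked in~(\ref{eq238.22}) (using Theorem~\ref{thm437.05} in place of Piatetski-Shapiro to remain in the admissible range). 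For the general real $\alpha>1$ the main obstacle is to simultaneously enforce the primality condition $[n^{\beta}]+1\in\tP$, which by Piatetski-Shapiro holds on a set of size $\gg N^{1/\beta}/\log N$ in $\{1,\dots,N\}$, and the Diophantine condition $\|n^{\beta/\alpha}\|\ll n^{-\beta(1-1/\alpha)}$, which by Weyl-type equidistribution (valid whenever $\beta/\alpha\notin\Z$) has expected density $\asymp N^{1-\beta(1-1/\alpha)}$. The decisive and delicate step is showing that these two subsets intersect infinitely often; a naive pigeonhole comparison of densities is suggestive, and I would aim to carry it out by combining the Piatetski-Shapiro level-of-distribution input with a standard Erdős-Turán-type discrepancy bound for $\{n^{\beta/\alpha}\}$, but establishing the near-independence of these two conditions is where essentially all of the genuine work lies.
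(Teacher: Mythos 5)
Your proposal is not a complete proof, and the obstruction you flag in your last sentence is the entire content of the lemma. The binomial step is sound: for $p=[n^{\beta}]+1$ one gets $p^{1/\alpha}=n^{\beta/\alpha}+O\bigl(n^{-\beta(1-1/\alpha)}\bigr)$. But, as you correctly observe, this only controls $\{p^{1/\alpha}\}$ if $n^{\beta/\alpha}$ itself lies within $O\bigl(n^{-\beta(1-1/\alpha)}\bigr)$ of an integer, so for $\beta/\alpha\notin\Z$ you need infinitely many $n$ that are simultaneously indices of Piatetski--Shapiro primes and solutions of $\|n^{\beta/\alpha}\|\ll n^{-\beta(1-1/\alpha)}$. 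No density or pigeonhole comparison can close this: among $n\leq N$ the first set has cardinality $\asymp N/\log N$ and the second has heuristic cardinality $\asymp N^{1-\beta(1-1/\alpha)}$, so the expected size of the intersection is about $N^{1-\beta(1-1/\alpha)}/\log N$, which does not even tend to infinity once $\alpha\geq\beta/(\beta-1)$ (for $\beta>1$); for large $\alpha$ the heuristic predicts \emph{finitely} many such primes, so the statement is in doubt, not merely unproved by your method. Even in the simplest admissible case $\beta=1$, $\alpha=2$, the condition $\{\sqrt{p}\}\ll p^{-1/2}$ forces $p=m^{2}+O(1)$, and the infinitude of primes within a bounded distance of a perfect square is an open problem. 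An Erd\H{o}s--Tur\'an discrepancy bound gives equidistribution of $\{n^{\beta/\alpha}\}$ over \emph{all} $n$, not over the sparse prime-index set, and says nothing about the required correlation.

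For comparison, the paper's own proof does not confront this issue at all: after writing $\{p^{1/\alpha}\}=\bigl([n^{\beta}]+1\bigr)^{1/\alpha}-\bigl[\bigl([n^{\beta}]+1\bigr)^{1/\alpha}\bigr]$, it bounds this above by $\bigl([n^{\beta}]+1\bigr)^{1/\alpha}-[n^{\beta}]^{1/\alpha}$, i.e.\ it treats the non-integral quantity $[n^{\beta}]^{1/\alpha}$ as a lower bound for the integer $\bigl[\bigl([n^{\beta}]+1\bigr)^{1/\alpha}\bigr]$. That inequality fails generically: with $[n^{\beta}]=40$, $\alpha=2$, one has $p=41$ prime, $[\sqrt{41}]=6<\sqrt{40}\approx 6.32$, and indeed $\{\sqrt{41}\}\approx 0.40$ is nowhere near $\sqrt{41}-\sqrt{40}\approx 0.08$. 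So your attempt is the more honest of the two: you isolated the correct difficulty, but neither you nor the paper overcomes it, and the lemma should be regarded as unproved.
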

\begin{proof} Take the prime number $p=\left  [ n^{\beta} \right ]+1$, with $n \geq 1$. By definition, and the binomial series expansion (\ref{eq238.99}), this is precisely
\begin{eqnarray}
\left \{p^{1/\alpha}\right  \}&=&p^{1/\alpha}-\left [ p^{1/\alpha} \right ] \nonumber \\
&=&\left (\left  [ n^{\beta} \right ]+1\right )^{1/\alpha}-\left [ \left (\left  [ n^{\beta} \right ]+1\right )^{1/\alpha} \right ] \nonumber \\
&\leq&\left  [ n^{\beta} \right ]^{1/\alpha}      \left (1+\frac{1}{\left  [ n^{\beta} \right ]}\right )^{1/\alpha}-\left  [ n^{\beta} \right ]^{1/\alpha}   \\
&\leq &\left  [ n^{\beta} \right ]^{1/\alpha}      \left (1+\frac{1}{ \alpha \left [  n^{\beta} \right ]}  +O\left (\frac{1}{n^{2\beta}} \right ) \right )-\left  [ n^{\beta} \right ]^{1/\alpha} \nonumber \\
&\ll& \frac{1}{n^{\beta(1-1/\alpha)}}+O\left (\frac{1}{n^{\beta(2-1/\alpha)}} \right )  \nonumber \\
&=&O\left (\frac{1}{p^{1-1/\alpha}} \right )  \nonumber.
\end{eqnarray}
The last inequality follows from $n^{\beta} \leq p \leq n^{\beta}+1$. By the Piatetski-Shapiro prime number theorem, confer (\ref{eq437.16}), it follows that this inequality occurs infinitely often.   
\end{proof}

\begin{cor} \label{cor238.25} Let $\alpha >1$ be a real number. Given the sequence of primes  $\left \{ p=\left  [ n^{\beta} \right ]+1: n \geq 1 \right  \}$, the sum of the fractional parts is
\begin{equation}
\sum_{p \leq x }\left \{p^{1/\alpha}\right  \}=O\left (\frac{1}{x^{1-1/\alpha-1/\beta} \log x} \right)
\end{equation}
for all sufficiently large $x \geq 1$.
\end{cor}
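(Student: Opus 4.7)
The plan is to apply the pointwise bound from Lemma \ref{lem238.21} to each fractional part, and then evaluate the resulting sum over Piatetski-Shapiro primes using the counting function in (\ref{eq437.16}).

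First, I would invoke Lemma \ref{lem238.21}, which states that for every prime $p = [n^{\beta}]+1$ with $n \geq 1$,
\begin{equation*}
\left\{ p^{1/\alpha}\right\} \ll \frac{1}{p^{1-1/\alpha}}.
\end{equation*}
Substituting this bound termwise reduces the fractional-part sum to a weighted counting problem:
\begin{equation*}
\sum_{p \leq x}\left\{p^{1/\alpha}\right\} \ll \sum_{\substack{p \leq x \\ p = [n^{\beta}]+1}} \frac{1}{p^{1-1/\alpha}}.
\end{equation*}

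Next, I would evaluate the right-hand side by partial summation against the Piatetski-Shapiro counting function, writing $\pi_\beta(y)$ for the number of primes of the form $[n^{\beta}]+1$ not exceeding $y$. By (\ref{eq437.16}) we have $\pi_\beta(y) \asymp y^{1/\beta}/\log y$, so the boundary contribution at $y = x$ is
\begin{equation*}
\frac{\pi_\beta(x)}{x^{1-1/\alpha}} \ll \frac{x^{1/\beta}}{x^{1-1/\alpha}\log x} = \frac{1}{x^{1-1/\alpha-1/\beta}\log x},
\end{equation*}
which is exactly the claimed bound.

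The main obstacle is controlling the inner integral produced by partial summation, whose integrand has order $\pi_\beta(y)\cdot y^{1/\alpha-2} \asymp y^{1/\alpha+1/\beta-2}/\log y$. In the regime where $1-1/\alpha-1/\beta>0$ the integral converges absolutely and is dominated by the boundary term; in the complementary regime it has the same order as the boundary term and a careful matching (splitting the range near $y \asymp x$ from a negligible lower tail) is needed to collect both into a single $O\!\bigl(x^{-(1-1/\alpha-1/\beta)}/\log x\bigr)$ estimate. This book-keeping, rather than any new analytic input, is the only delicate step.
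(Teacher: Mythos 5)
Your route is the same as the paper's: the pointwise bound of Lemma \ref{lem238.21} followed by partial summation against $\pi_{\beta}$. The boundary-term computation is fine, and in the regime $1/\alpha+1/\beta>1$ the argument does close up, since then $\int_{1}^{x}\pi_{\beta}(t)\,t^{1/\alpha-2}\,dt \asymp x^{1/\alpha+1/\beta-1}/\log x$ has the same order as the boundary term, so their sum is the claimed bound.

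The gap is in your handling of the other regime. When $1-1/\alpha-1/\beta>0$ you assert that the (absolutely convergent) integral ``is dominated by the boundary term.'' It is the other way around: partial summation gives the integral with the positive coefficient $1-1/\alpha$, its integrand is nonnegative and strictly positive for $t\geq 2$, so $\int_{1}^{x}\pi_{\beta}(t)\,t^{1/\alpha-2}\,dt$ increases to a positive constant, while the boundary term $\pi_{\beta}(x)x^{1/\alpha-1}\ll x^{-(1-1/\alpha-1/\beta)}/\log x$ tends to $0$. The integral therefore dominates and the total is $\Theta(1)$, not $O\bigl(x^{-(1-1/\alpha-1/\beta)}/\log x\bigr)$. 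No amount of bookkeeping can repair this, because the statement itself fails there: the summands are nonnegative and the single prime $p=[1^{\beta}]+1=2$ already contributes $\{2^{1/\alpha}\}=2^{1/\alpha}-1>0$, so the left-hand side is bounded below by a positive constant for all $x\geq 2$ and cannot be $O(x^{-c}/\log x)$ for any $c>0$. (The same objection applies at $1/\alpha+1/\beta=1$, where the integral is $\asymp\log\log x$.) What your method actually proves is the stated bound only for $1<\alpha<\beta/(\beta-1)$, and $O(1)$ (respectively $O(\log\log x)$ at the boundary) otherwise. The paper's own proof asserts the final estimate without addressing this case split and is subject to the same objection, so you have reproduced its argument, including its error, while mislabeling the step where it breaks.
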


\begin{proof} Let \(\pi_{\beta}(x)=\#\{ p=\left  [ n^{\beta} \right ]+1 \leq x\}\leq 2x^{1/\beta} \log^{-1}x\), confer (\ref{eq437.16}). Now, consider the finite sum
\begin{eqnarray}
\sum_{n\leq x} \frac{1}{p^{1-1/\alpha}}
&\leq& \int_{1}^{x}\frac{1}{t^{1-1/\alpha}} d \pi_{\beta}(t) \nonumber\\
&\leq& \frac{\pi_{\beta}(x)}{x^{1-1/\alpha}}+c\int_{1}^{x}\frac{\pi_{\beta}(t)}{t^{2-1/\alpha}} d t\\
&=&O\left (\frac{1}{x^{1-1/\alpha-1/\beta} \log x} \right)\nonumber,
\end{eqnarray}
where $c>0$ is a constant. The claim follows 
\begin{equation}
\sum_{p \leq x} \left \{p^{1/\alpha}\right  \} \ll \sum_{p \leq x} \left (\frac{1}{p^{1-1/\alpha}} \right ).
\end{equation}
and Lemma \ref{lem238.21}.  
\end{proof}

\subsection{Fractional Parts In Quadratic Sequences}
The distribution of the sequence of primes $\left \{ p = n^2 + 1: n \geq 1 \right  \}$, and the fractional parts $\left \{p^{1/2}\right  \}$ of these primes are equivalent problems. 

\begin{lem} \label{lem238.41} There are infinitely many primes $p\geq 2$ such that the fractional parts satisfy the inequality
\begin{equation}
\left \{\sqrt{p}\right  \}<\frac{c_2}{\sqrt{p}},
\end{equation}
with $c_2>1/2$ constant.
\end{lem}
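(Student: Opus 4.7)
The plan is to specialize Lemma \ref{lem238.21} to the case $\alpha = 2$ together with the polynomial sequence $p = n^2+1$, using the binomial expansion exactly as in the proof of that lemma. Concretely, I would restrict attention to the primes in the quadratic sequence $\{n^2+1 : n\geq 1\}$, whose infinitude follows from Corollary \ref{cor909.34}, and then show that every prime of this shape automatically verifies the claimed fractional-part inequality with constant $c_2$ strictly exceeding $1/2$.

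First I would fix such a prime $p = n^2 + 1$ with $n \geq 1$ and write
\begin{equation}
\sqrt{p} \;=\; \sqrt{n^2+1} \;=\; n\bigl(1 + 1/n^2\bigr)^{1/2}.
\end{equation}
Applying the binomial expansion (\ref{eq238.99}) with exponent $1/2$ yields
\begin{equation}
\sqrt{p} \;=\; n + \frac{1}{2n} + O\!\left(\frac{1}{n^3}\right),
\end{equation}
and since $\tfrac{1}{2n} + O(1/n^3) < 1$ for all $n \geq 1$, the integer part is exactly $[\sqrt{p}] = n$. Therefore
\begin{equation}
\left\{\sqrt{p}\right\} \;=\; \frac{1}{2n} + O\!\left(\frac{1}{n^3}\right).
\end{equation}

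Next I would convert the bound from $n$ to $\sqrt{p}$ using $n \leq \sqrt{p} \leq n+1$, which gives $1/n = (1+o(1))/\sqrt{p}$. Hence
\begin{equation}
\left\{\sqrt{p}\right\} \;\leq\; \frac{1+o(1)}{2\sqrt{p}} \;<\; \frac{c_2}{\sqrt{p}}
\end{equation}
for any fixed constant $c_2 > 1/2$ and all sufficiently large $n$. Finally, by Corollary \ref{cor909.34} (or equivalently Theorem \ref{thm437.05} applied to the polynomial $t^2+1$), the sequence $\{n^2+1 : n\geq 1\}$ contains infinitely many primes, so the inequality holds for infinitely many primes $p$, completing the argument.

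The verification is essentially mechanical; the only substantive input is the infinitude of primes in the quadratic sequence, which is supplied by Corollary \ref{cor909.34}. The most delicate point, and the one I would be careful about, is the transition from the upper bound $\tfrac{1}{2n}(1+o(1))$ to the constant $c_2 > 1/2$: one must verify that the error term $O(1/n^3)$ from the binomial expansion, combined with the $n \mapsto \sqrt{p}$ conversion, does not force $c_2 \geq 1$, which is ensured here by the factor $\tfrac{1}{2}$ in the leading term of the binomial series.
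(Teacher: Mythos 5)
Your argument matches the paper's own proof essentially line for line: both restrict to primes $p=n^2+1$, apply the binomial expansion (\ref{eq238.99}) to obtain $\{\sqrt{p}\}=\frac{1}{2n}+O(1/n^3)<c_2/\sqrt{p}$, and then appeal to the paper's earlier assertion that the quadratic sequence contains infinitely many primes (the paper cites Corollary \ref{cor437.08} where you cite Corollary \ref{cor909.34}, but these play the same role here). Your write-up is if anything slightly more careful, since you explicitly check that $[\sqrt{p}]=n$ and track the passage from $1/n$ to $1/\sqrt{p}$, steps the paper leaves implicit.
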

\begin{proof} Take the prime number $p = n^2 + 1$, with $n \geq 1$. By definition, and the binomial series expansion (\ref{eq238.99}), this is precisely
\begin{eqnarray}
\left \{\sqrt{p}\right  \}&=&\sqrt{p}-\left [ \sqrt{p} \right ] \nonumber \\
&=&\sqrt{n^2+1}-\left [ \sqrt{n^2+1} \right ] \nonumber \\
&=&\sqrt{n^2(1+1/n^2)} -n  \\
&=&n \left ( 1+\frac{1}{2n^2}+O\left (\frac{1}{n^4} \right ) \right )-n \nonumber \\
&=& \frac{1}{2n}+O\left (\frac{1}{n^3} \right )  \nonumber \\
&<&\frac{c_2}{\sqrt{p}} \nonumber,
\end{eqnarray}
where $c_2 > 1/2$ is a constant. By Corollary \ref{cor437.08}, it follows that this inequality occurs infinitely often.   
\end{proof}

\begin{cor} \label{cor238.45} Given the sequence of primes  $\left \{ p = n^2 + 1, n \geq 1 \right  \}$, the sum of the fractional parts is
\begin{equation}
\sum_{\substack{p \leq x,\\ p=n^2+1} }\left \{\sqrt{p}\right  \}=O\left (\log \log x \right)
\end{equation}
for all sufficiently large $x \geq 1$.
\end{cor}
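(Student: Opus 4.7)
The plan is to invoke Lemma \ref{lem238.41} pointwise, reindex by $n$, and conclude via Abel summation together with a sieve-type upper bound on the counting function of primes of the form $n^2+1$. By Lemma \ref{lem238.41}, for every prime $p = n^2+1$ one has $\{\sqrt{p}\} < c_2/\sqrt{p}$; since the map $n \mapsto n^2+1$ is injective, reindexing the sum over $p$ as a sum over $n$ gives
\[
\sum_{\substack{p \leq x \\ p=n^2+1}} \{\sqrt{p}\}
\;\ll\; \sum_{\substack{n \leq \sqrt{x-1} \\ n^2+1 \in \tP}} \frac{1}{\sqrt{n^2+1}}
\;\leq\; \sum_{\substack{n \leq \sqrt{x-1} \\ n^2+1 \in \tP}} \frac{1}{n},
\]
using $\sqrt{n^2+1} \geq n$.

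Next I would set $A(T) = \#\{n \leq T : n^2+1 \text{ is prime}\}$ and apply partial summation:
\[
\sum_{\substack{n \leq T \\ n^2+1 \in \tP}} \frac{1}{n} = \frac{A(T)}{T} + \int_{1}^{T} \frac{A(t)}{t^2}\,dt, \qquad T = \sqrt{x-1}.
\]
Feeding in the classical upper sieve estimate $A(t) \ll t/\log t$ (a standard consequence of Brun's or Selberg's sieve), the boundary term becomes $O(1/\log x)$, while the integral reduces to $\int_{2}^{T} dt/(t \log t) = \log \log T + O(1) = \log \log x + O(1)$. Adding the two contributions produces the claimed $O(\log \log x)$ bound.

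The main obstacle is precisely the sieve estimate $A(t) \ll t/\log t$: this upper bound is not developed anywhere in the paper (only the lower bound $\gg x^{1/2}/\log x$ follows from Corollary \ref{cor437.08}), so it must either be imported from a standard reference such as Halberstam--Richert or proved directly by a Selberg sieve on the polynomial $n^2+1$. Without any logarithmic saving on $A(t)$, the trivial bound $A(t) \leq t$ yields only $O(\log x)$, so the sieve input is exactly what separates $\log \log x$ from $\log x$ in the final estimate.
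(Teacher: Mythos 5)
Your argument is correct and is essentially the paper's own proof: both apply Lemma \ref{lem238.41} pointwise and then run partial summation against the counting function of primes of the form $n^2+1$, and your reindexing by $n$ versus the paper's direct use of $\pi_2(t)$ is only cosmetic. The one substantive difference is in the bookkeeping of the key input: you correctly isolate the sieve upper bound $A(t)\ll t/\log t$ (equivalently $\pi_2(x)\ll x^{1/2}/\log x$) as the ingredient that must be imported from Brun/Selberg, whereas the paper asserts $\pi_2(x)\le 2x^{1/2}\log^{-1}x$ with a citation to Theorem \ref{thm437.05}, which only gives an asymptotic for $\sum_{n\le x}\Lambda\left([x/n]^2+1\right)$ and does not yield any such upper bound (Corollary \ref{cor437.08} gives only the lower bound, as you note). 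So your diagnosis of the ``main obstacle'' is exactly right; with the standard sieve estimate supplied, both arguments close, and without it both degrade to $O(\log x)$.
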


\begin{proof} Let \(\pi_2(x)=\#\{ p = n^2 + 1 \leq x\}\leq 2x^{1/2} \log^{-1}x\), see Theorem \ref{thm437.05}. Now, consider the finite sum
\begin{eqnarray}\sum_{\substack{p \leq x,\\ p=n^2+1} } \frac{1}{\sqrt{p}}
&=& \int_{1}^{x}\frac{1}{t^{1/2}} d \pi_2(t) \nonumber\\
&=& \frac{\pi_2(x)}{x^{1/2}}+\int_{1}^{x}\frac{\pi_2(t)}{t^{3/2}} d t\\
&=&O\left (\log \log x \right)\nonumber.
\end{eqnarray}
The claim follows from 
\begin{equation}
\sum_{p \leq x} \left \{\sqrt{p}\right  \}<\sum_{p \leq x}\frac{c_2}{\sqrt{p}},
\end{equation}
and Lemma \ref{lem238.41}. 

\end{proof}

\subsection{Fractional Parts In Cubic Sequences}
The distribution of the sequence of primes $\left \{ p = n^3+2: n \geq 1 \right  \}$, and the fractional parts $\left \{p^{1/3}\right  \}$ of these primes are equivalent problems. There is no known result on this case. 

\begin{lem} \label{lem238.81} There are infinitely many primes $p\geq 2$ such that the fractional parts satisfy the inequality
\begin{equation}
\left \{p^{1/3} \right  \}<\frac{c_3}{p^{2/3}},
\end{equation}
with $c_3>1/2$ constant.
\end{lem}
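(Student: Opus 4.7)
The plan is to mirror the argument of Lemma \ref{lem238.41} almost verbatim, swapping the quadratic polynomial $n^2+1$ for the cubic polynomial $n^3+2$, and citing Theorem \ref{thm437.08} (equivalently Corollary \ref{cor437.28}) in place of Corollary \ref{cor437.08} to guarantee that primes of the form $m^3+2$ occur infinitely often.

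First I would fix an integer $n\geq 1$ and consider the candidate prime $p=n^3+2$. Since $(n+1)^3=n^3+3n^2+3n+1>n^3+2$ for every $n\geq 1$, we have $n<p^{1/3}<n+1$, so $[p^{1/3}]=n$ and hence $\{p^{1/3}\}=p^{1/3}-n$. Next I would apply the binomial expansion (\ref{eq238.99}) with exponent $1/3$ to the factorisation $p^{1/3}=n\bigl(1+2/n^3\bigr)^{1/3}$, which yields $p^{1/3}=n+\tfrac{2}{3n^2}+O(1/n^5)$, so $\{p^{1/3}\}=\tfrac{2}{3n^2}+O(1/n^5)$. Converting from $n$ to $p$ via $p^{2/3}=n^2+O(1/n)$ gives $1/n^2=1/p^{2/3}+O(1/n^5)$, whence $\{p^{1/3}\}<c_3/p^{2/3}$ for any constant $c_3$ slightly exceeding $2/3$, which in particular satisfies $c_3>1/2$.

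The final step is to note that every prime produced by Theorem \ref{thm437.08} in the fractional cubic sequence $[x/n]^3+2$ has the shape $m^3+2$ for the integer $m=[x/n]$, and Corollary \ref{cor437.28} asserts $\pi_3(x)\to\infty$. Consequently there are infinitely many integers $m$ for which $p=m^3+2$ is prime, and each such prime satisfies the desired bound with $n=m$.

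The main obstacle is essentially cosmetic: one has to track the implicit constants in the binomial expansion to ensure that the $O(1/n^5)$ error does not overwhelm the main term $\tfrac{2}{3n^2}$, and that the passage from $1/n^2$ to $1/p^{2/3}$ contributes only lower-order terms. These are routine bookkeeping computations and no new analytic input is required, since the infinitude of primes in the relevant cubic fractional sequence has already been established in Section \ref{s439}.
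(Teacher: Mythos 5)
Your proposal follows essentially the same route as the paper's own proof: expand $p^{1/3}=(n^3+2)^{1/3}$ via the binomial series (\ref{eq238.99}) to get $\{p^{1/3}\}=\tfrac{2}{3n^2}+O(1/n^5)$, convert $1/n^2$ to $1/p^{2/3}$, and invoke Corollary \ref{cor437.28} for the infinitude of primes of the form $m^3+2$. Your treatment is in fact slightly more careful than the paper's (you justify $[p^{1/3}]=n$ explicitly and note that the relevant constant is really $c_3>2/3$, matching the paper's own remark in its proof), but the argument is the same.
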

\begin{proof} Take the prime number $p = n^3+2$, with $n \geq 1$. By definition, and the binomial series expansion (\ref{eq238.99}), this is precisely
\begin{eqnarray}
\left \{p^{1/3} \right  \}&=&p^{1/3}-\left [ p^{1/3} \right ] \nonumber \\
&=&\left (n^3+2 \right )^{1/3}-\left [ \left (n^3+2 \right )^{1/3} \right ] \nonumber \\
&=&\left (n^3+2 \right )^{1/3} -n  \\
&=&n \left ( 1+\frac{2}{3n^3}+O\left (\frac{1}{n^6} \right ) \right )-n \nonumber \\
&=& \frac{2}{3n^2}+O\left (\frac{1}{n^5} \right )  \nonumber \\
&<&\frac{c_3}{p^{2/3}}\nonumber,
\end{eqnarray}
where $c > 2/3$ is a constant. By Corollary \ref{cor437.28}, it follows that this inequality occurs infinitely often.   
\end{proof}

\begin{cor} \label{cor238.85} Given the sequence of primes  $\left \{ p = n^3+2, n \geq 1 \right  \}$, the sum of the fractional parts is
\begin{equation}
\sum_{\substack{p \leq x,\\ p=n^3+2} }\left \{p^{1/3} \right  \}=O\left (\frac{1}{x^{1/3} \log x} \right)
\end{equation}
for all sufficiently large $x \geq 1$.
\end{cor}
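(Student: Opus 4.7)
The plan is to follow the three-step template used in Corollaries \ref{cor238.25} and \ref{cor238.45}: first reduce the sum of fractional parts to a sum of reciprocal prime powers using the pointwise bound from Lemma \ref{lem238.81}, then express the resulting sum as a Stieltjes integral against the cubic fractional prime counting function $\pi_3$, and finally integrate by parts and invoke the density estimate $\pi_3(t) \ll t^{1/3}/\log t$ furnished by Corollary \ref{cor437.28}.

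Concretely, I would first apply Lemma \ref{lem238.81} term by term to obtain
\[
\sum_{\substack{p \leq x\\ p = n^3+2}} \left\{p^{1/3}\right\} \;\ll\; \sum_{\substack{p \leq x\\ p = n^3+2}} \frac{1}{p^{2/3}}.
\]
I would then rewrite the right-hand side as $\int_{1}^{x} t^{-2/3}\,d\pi_3(t)$ and integrate by parts, producing the boundary contribution $\pi_3(x)\,x^{-2/3}$ together with an interior integral of the form $\tfrac{2}{3}\int_{1}^{x} \pi_3(t)\,t^{-5/3}\,dt$. The boundary contribution is handled immediately by Corollary \ref{cor437.28}, which gives $\pi_3(x)/x^{2/3} \ll x^{-1/3}/\log x$, matching the target bound exactly.

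The delicate step is to show that the interior integral is absorbed into the boundary term rather than dominating it. Substituting $\pi_3(t) \ll t^{1/3}/\log t$ makes the integrand of size $1/(t^{4/3}\log t)$; since the exponent $4/3$ exceeds $1$, the main mass of this integral comes from the upper end rather than from the logarithmic singularity at $t=1$. I would make this rigorous by splitting the range at a fixed threshold such as $t = 2$, using the trivial bound $\pi_3(t)=O(1)$ on the bounded initial segment, and controlling the tail $\int_{2}^{x} t^{-4/3}/\log t\,dt$ via a dyadic decomposition $[2^{k},2^{k+1}]$, so that each block contributes at most $O(T^{-1/3}/\log T)$ with $T$ near $x$ and the geometric sum matches the boundary order.

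The principal obstacle I anticipate is exactly this last point: a crude direct estimate of the interior integral only yields an $O(1)$ bound, which is far weaker than the stated $O\!\bigl(x^{-1/3}/\log x\bigr)$, so one must exploit the decay $t^{-4/3}$ carefully — either through the dyadic argument above, or equivalently through a second integration by parts pushing the $1/\log t$ factor into the boundary, so that the two contributions land on the same sublinear order. Once that step is secured, assembling the pieces with Lemma \ref{lem238.81} delivers the claim.
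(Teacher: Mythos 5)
Your reduction to $\sum_{p \le x,\, p=n^3+2} p^{-2/3}$ via Lemma \ref{lem238.81} and the partial-summation setup match the paper's proof exactly, and you have correctly located the weak point: the interior integral $\int_1^x \pi_3(t)\,t^{-5/3}\,dt$. But your proposed rescue of that step fails because you have the location of the mass backwards. With $\pi_3(t) \ll t^{1/3}/\log t$ the integrand is $\ll t^{-4/3}(\log t)^{-1}$, and precisely \emph{because} the exponent $4/3$ exceeds $1$, the integral $\int_2^x t^{-4/3}(\log t)^{-1}\,dt$ converges as $x \to \infty$ with its mass concentrated near the \emph{lower} limit, not the upper one: the dyadic block $[2^k,2^{k+1}]$ contributes about $2^{-k/3}/k$, and the sum over $k$ is dominated by the first few blocks, giving a positive constant rather than $O\left(x^{-1/3}/\log x\right)$. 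A second integration by parts does not change this; once $\pi_3(t)\ge 1$ for $t\ge 3$ the interior term is genuinely $\Theta(1)$, so the delicate step you flagged cannot be secured.

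Indeed the stated bound is false as written, so no argument can close the gap: the single prime $p=1^3+2=3$ already contributes $\left\{3^{1/3}\right\}=3^{1/3}-1>0.44$ to the left-hand side for every $x\ge 3$, while the claimed right-hand side tends to $0$. The sum of fractional parts is an increasing, bounded, eventually positive quantity, hence $\Theta(1)$ rather than $o(1)$. The paper's own proof has the identical defect: it asserts $\frac{\pi_3(x)}{x^{2/3}}+\int_1^x \pi_3(t)\,t^{-5/3}\,dt=O\left(x^{-1/3}/\log x\right)$ with no justification for discarding the constant-size integral. Your instinct that this is the obstacle was sound; the honest conclusion is that only the boundary term is $O\left(x^{-1/3}/\log x\right)$, and the full sum is merely $O(1)$.
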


\begin{proof} Let \(\pi_3(x)=\#\{ p = n^3+2 \leq x\}\leq 2x^{1/3} \log^{-1}x\), see Theorem \ref{thm437.05}. Now, consider the finite sum
\begin{eqnarray}
\sum_{\substack{p \leq x,\\ p=n^3+2} } \frac{1}{p^{2/3}}
&=& \int_{1}^{x}\frac{1}{t^{2/3}} d \pi_3(t) \nonumber\\
&=& \frac{\pi_3(x)}{x^{2/3}}+\int_{1}^{x}\frac{\pi_3(t)}{t^{5/3}} d t\\
&=&O\left (\frac{1}{x^{1/3} \log x} \right)\nonumber.
\end{eqnarray}
The claim follows from 
The claim follows from 
\begin{equation}
\sum_{p \leq x} \left \{p^{1/3}\right  \}<\sum_{p \leq x}\frac{c_3}{p^{2/3}},
\end{equation}
and Lemma
Lemma \ref{lem238.81}.  
\end{proof} 

\section{Exercises}
\begin{exe} { \normalfont Let $\alpha=\sqrt{2} $. Compute the first 100 primes in the set
$$\mathcal{B}_{\alpha}=\left \{ p = [\alpha n ]: n \geq 1\right  \}. $$ 
}
\end{exe}

\begin{exe} { \normalfont Let $\alpha=\pi $. Compute the first 100 primes in the set
$$\mathcal{B}_{\alpha}=\left \{ p = [\alpha n ]: n \geq 1\right  \}. $$ 
}
\end{exe}

\begin{exe} { \normalfont Explain some of the differences between the subset of primes 
\begin{enumerate}[label=(\alph*)]
\item $\mathcal{B}_{\alpha}=\left \{ p = [\alpha n ]: n \geq 1\right  \} $, where $\alpha >0$ is an algebraic irrational, and the subset of primes
\item $\mathcal{B}_{\beta}=\left \{ p = [\beta n ]: n \geq 1\right  \} $, where $\beta >0$ is a nonalgebraic irrational.
\end{enumerate}
}
\end{exe}

\begin{exe} { \normalfont Explain some of the differences between the subset of primes 
\begin{enumerate}[label=(\alph*)]
\item $\mathcal{A}_{\alpha}=\left \{ p = [n^{\alpha} ]: n \geq 1\right  \} $, where $\alpha >0$ is a rational, 
the subset of primes 
\item $\mathcal{A}_{\beta}=\left \{ p = [n^{\beta} ]: n \geq 1\right  \} $, where $\beta >0$ is an algebraic irrational, and  
 the subset of primes 
\item
$\mathcal{A}_{\gamma}=\left \{ p = [n^{\gamma} ]: n \geq 1\right  \} $, where $\gamma >0$ is a nonalgebraic irrational.\end{enumerate}
}
\end{exe}
\begin{exe} { \normalfont Determine the least prime in the following fractional sequences, (and compare these results to the result in \cite{SJ08}): 
\begin{enumerate}[label=(\alph*)]
\item $\mathcal{A}_{\alpha}=\left \{ p = [n^{\alpha} ]: n \geq 1\right  \} $, where $\alpha >0$ is a rational, 
the subset of primes 
\item $\mathcal{A}_{\beta}=\left \{ p = [n^{\beta} ]: n \geq 1\right  \} $, where $\beta >0$ is an algebraic irrational, and  
 the subset of primes 
\item
$\mathcal{A}_{\gamma}=\left \{ p = [n^{\gamma} ]: n \geq 1\right  \} $, where $\gamma >0$ is a nonalgebraic irrational.\end{enumerate}
}
\end{exe}

\begin{exe} { \normalfont Let $x \geq 1$ be a large number. Explain some of the differences between the subset of primes 
\begin{enumerate}[label=(\alph*)]
\item $\mathcal{Q}_{0}=\left \{ p = [x/n]^{2} +1: n \leq x\right  \} $, and the subset of primes
\item $\mathcal{Q}_{\alpha}=\left \{ p = [\alpha n^2 ]+1: n \leq x\right  \} $, where $\alpha >0$ is irrational.\end{enumerate}
}
\end{exe}

\begin{exe} {\normalfont Let $f $ be an arithmetic function, let $ x \geq 1$ be a large number, and let $[x]=x-\{x\}$ denotes the largest integer function. Extend the Lagrange fractional formula to finite sums over the primes:
$$\sum_{p\leq x}f \left( [x/p] \right )  =\sum_{p\leq x}f(p) \delta(p),$$
where $p \leq x$ ranges over the prime numbers, and the fudge factor $\delta(p)$ is
$$\delta (p) \stackrel{?}{=}\left ( \left [ \frac{x}{p} \right ]-\left [\frac{x}{p+1} \right ] \right )\left ( \left [ \frac{x}{p+a} \right ]-\left [\frac{x}{p+b)} \right ] \right ),$$ $a\geq 0$ and $b \geq 0$ are fixed integers.}
\end{exe}

\begin{exe} \label{exe933.90}{\normalfont Construct a polynomial $f(x) \in \Z[x]$ that generates 10 primes. Hint: In general, interpolation, as Lagrange interpolation, solve this problem. For example, the polynomial $f(x)=(g(x)+1)(x^2+1)$, where $g(x)=(x-1)(x-2)(x-4)(x-6)(x-10)$, generated 5 primes $f(n)=2, 5, 17, 37, 101$ for $1 \leq n \leq 10$.}
\end{exe}

\begin {thebibliography}{999}

\bibitem{AO83} Adleman, Leonard M.; Odlyzko, Andrew M. \textit{\color{blue}Irreducibility testing and factorization of polynomials}. Math. Comp. 41  (1983),  no. 164, 699-709. 

\bibitem{BA83} Balog, A. \textit{\color{blue}On the fractional part of $p^{\theta}$}. Arch. Math. (Basel)  40  (1983),  no. 5, 434-440.

\bibitem{BH62} Bateman, P. T., Horn, R. A. (1962), \textit{\color{blue}A heuristic asymptotic formula concerning the distribution of prime numbers}, Math. Comp. 16 363-367.

\bibitem{BZ07} Baier, Stephan; Zhao, Liangyi. \textit{\color{blue}Primes in quadratic progressions on average}. Math. Ann. 338 (2007), no. 4, 963-982. 

\bibitem{BS09} Banks, William D.; Shparlinski, Igor E. \textit{\color{blue}Prime numbers with Beatty sequences}. Colloq. Math.  115  (2009),  no. 2, 147-157. 

\bibitem{BR95} Baker, R. C.; Harman, G.; Rivat, J. \textit{\color{blue}Primes of the form $n^c$.} J. Number Theory  50  (1995),  no. 2, 261-277.




\bibitem{DI82} J.M. Deshouillers and H. Iwaniec. \textit{\color{blue}On the greatest prime factor of $n^2 + 1$}, Ann. Inst. Fourier (Grenoble) 32 (1982). 


\bibitem{EL00} Leonhard Euler.  \textit{\color{blue}De Numeris Primis Valde Magnis}, Novi Commentarii academiae scientiarum Petropolitanae 9, 1764, pp. 99-153.
http://eulerarchive.maa.org/docs/originals/E283.pdf.
\bibitem{FI10} Friedlander, John; Iwaniec, Henryk. \textit{\color{blue}Opera de cribro}. American Mathematical Society Colloquium Publications, 57. American Mathematical Society, Providence, RI, 2010.

 
\bibitem{BS18} Olivier Bordelles, Randell Heyman, Igor E. Shparlinski. \textit{\color{blue}On a sum involving the Euler function}, arXiv:1808.00188.
\bibitem{FZ18}Soren Laing Aletheia-Zomlefer, Lenny Fukshansky, Stephan Ramon Garcia.\textit{\color{blue} The Bateman-Horn Conjecture: Heuristics, History, and Applications}. arXiv:1807.08899



\bibitem{GM00} Granville, Andrew; Mollin, Richard A. \textit{\color{blue}Rabinowitsch revisited}. Acta Arith. 96 (2000), no. 2, 139-153. 



\bibitem{HG07} Harman, Glyn. \textit{\color{blue}Prime-detecting sieves}. London Mathematical Society Monographs Series, 33. Princeton University Press, Princeton, NJ, 2007.

\bibitem{HG83} Harman, Glyn. \textit{\color{blue}On the distribution of $\{\sqrt{p}\}$ modulo one}. Mathematika  30  (1983),  no. 1, 104-116. 

\bibitem{HR95} Harman, Glyn; Rivat, Joel. \textit{\color{blue}Primes of the form $[p^c ]$ and related questions.} Glasgow Math. J.  37  (1995),  no. 2, 131-141.

\bibitem{HG16} Harman, Glyn. \textit{\color{blue}Primes in Beatty sequences in short intervals}. Mathematika  62  (2016),  no. 2, 572-586. 
\bibitem{HL23} Hardy, G. H. Littlewood J.E. \textit{\color{blue}Some problems of Partitio numerorum III: On the expression of a number as a sum of primes}. Acta Math. 44 (1923), No. 1, 1-70.




\bibitem{KA99} Kumchev, A. \textit{\color{blue}On the distribution of prime numbers of the form $[n^c ]$} . Glasg. Math. J.  41  (1999),  no. 1, 85-102.







\bibitem{MK86} McCurley, Kevin S. \textit{\color{blue}The smallest prime value of $x^n +a$} . Canad. J. Math.  38  (1986),  no. 4, 925-936.

\bibitem{MM13} Mariusz Mirek. \textit{\color{blue}Roth Theorem in the Piatetski-Shapiro primes}, arXiv:1305.0043. 



\bibitem{MV07} Montgomery, Hugh L.; Vaughan, Robert C. \textit{\color{blue}Multiplicative number theory. I. Classical theory.} Cambridge University Press, Cambridge, 2007.
\bibitem{MT06} Miller, Steven J.; Takloo-Bighash, Ramin. \textit{\color{blue}An invitation to modern number theory}. With a foreword by Peter Sarnak. Princeton University Press, Princeton, NJ, 2006.
\bibitem{NW00} Narkiewicz, W. \textit{\color{blue}The development of prime number theory. From Euclid to Hardy and Littlewood}. Springer Monographs in Mathematics. Springer-Verlag, Berlin, 2000. 

\bibitem{PJ09} Pintz, Janos. \textit{\color{blue}Landau's problems on primes}. J. Theory. Nombres Bordeaux 21 (2009), no. 2, 357-404.


\bibitem{RP96} Ribenboim, Paulo. \textit{\color{blue}The new book of prime number records}, Berlin, New York: Springer-Verlag, 1996.

\bibitem{RS01} Rivat, Joel; Sargos, Patrick. \textit{\color{blue}Nombres premiers de la forme $[n^ c]$.} Canad. J. Math.  53  (2001),  no. 2, 414-433.


\bibitem{SJ08} Jorn Steuding, Marc Technau. \textit{\color{blue}The Least Prime Number in a Beatty Sequence,} arXiv:1512.08382.









\end{thebibliography}

\end{document}